\definecolor{verylight}{gray}{0.97}
\definecolor{light}{gray}{0.9}
\definecolor{medium}{gray}{0.85}
\definecolor{dark}{gray}{0.6}
\def\frk{\frak}               % font for "Fraktur"
\def\Phi{{\frk n}}
\def\Phi{{\frk N}}
\def\opn#1#2{\def#1{\operatorname{#2}}} % to make operators
\opn\chara{char} \opn\length{\ell} \opn\pd{pd} \opn\rk{rk}
\opn\projdim{proj\,dim} \opn\injdim{inj\,dim} \opn\rank{rank}
\opn\depth{depth} \opn\grade{grade} \opn\height{height}
\opn\embdim{emb\,dim} \opn\codim{codim}
\opn\Tr{Tr} \opn\bigrank{big\,rank}
\opn\superheight{superheight}\opn\lcm{lcm}
\opn\trdeg{tr\,deg}%\emph{
\opn\reg{reg} \opn\lreg{lreg} \opn\ini{in} \opn\lpd{lpd}
\opn\size{size}\opn\bigsize{bigsize}
\opn\cosize{cosize}\opn\bigcosize{bigcosize}
\opn\sdepth{sdepth}\opn\sreg{sreg}
\opn\link{link}\opn\fdepth{fdepth}
\opn\div{div} \opn\Div{Div} \opn\cl{cl} \opn\Cl{Cl}
\opn\Spec{Spec} \opn\Supp{Supp} \opn\supp{supp} \opn\Sing{Sing}
\opn\Ass{Ass} \opn\Min{Min}\opn\Mon{Mon} \opn\dstab{dstab} \opn\astab{astab}
\opn\Syz{Syz}
\opn\Ann{Ann} \opn\Rad{Rad} \opn\Soc{Soc}
\opn\Im{Im} \opn\Ker{Ker} \opn\Coker{Coker} \opn\Am{Am}
\opn\Hom{Hom} \opn\Tor{Tor} \opn\Ext{Ext} \opn\End{End}
\opn\Aut{Aut} \opn\id{id}
\opn\nat{nat}
\opn\pff{pf}%   \pf exists already
\opn\Pf{Pf} \opn\GL{GL} \opn\SL{SL} \opn\mod{mod} \opn\ord{ord}
\opn\Gin{Gin} \opn\Hilb{Hilb}\opn\sort{sort}
\opn\initial{init}
\opn\ende{end}
\opn\height{height}
\opn\type{type}
\opn\aff{aff} \opn\con{conv} \opn\relint{relint} \opn\st{st}
\opn\lk{lk} \opn\cn{cn} \opn\core{core} \opn\vol{vol}
\opn\link{link} \opn\star{star}\opn\lex{lex}
\opn\gr{gr}
\def\pot#1#2{#1[\kern-0.28ex[#2]\kern-0.28ex]}
\opn\dirlim{\underrightarrow{\lim}}
\opn\inivlim{\underleftarrow{\lim}}
\def\Implies{\ifmmode\Longrightarrow \else
        \unskip${}\Longrightarrow{}$\ignorespaces\fi}
\def\implies{\ifmmode\Rightarrow \else
        \unskip${}\Rightarrow{}$\ignorespaces\fi}
\def\iff{\ifmmode\Longleftrightarrow \else
        \unskip${}\Longleftrightarrow{}$\ignorespaces\fi}
\newtheorem{Theorem}{Theorem}[section]
 \newtheorem{Lemma}[Theorem]{Lemma}
 \newtheorem{Corollary}[Theorem]{Corollary}
 \newtheorem{Proposition}[Theorem]{Proposition}
 \newtheorem{Example}[Theorem]{Example}
\let\epsilon\varepsilon
\let\kappa=\varkappa
\def\qed{\ifhmode\textqed\fi
      \ifmmode\ifinner\quad\qedsymbol\else\dispqed\fi\fi}
\def\textqed{\unskip\nobreak\penalty50
       \hskip2em\hbox{}\nobreak\hfil\qedsymbol
       \parfillskip=0pt \finalhyphendemerits=0}
\def\dispqed{\rlap{\qquad\qedsymbol}}
\opn\dis{dis}
\def\pnt{{\raise0.5mm\hbox{\large\bf.}}}
\opn\Lex{Lex}
\begin{document}

\title {A note on stability properties of powers of polymatroidal ideals}

\author {Amir Mafi and Dler Naderi}

\address{Amir Mafi, Department of Mathematics, University Of Kurdistan, P.O. Box: 416, Sanandaj, Iran.}
\email{A\_Mafi@ipm.ir}
\address{Dler Naderi, Department of Mathematics, University of Kurdistan, P.O. Box: 416, Sanandaj,
Iran.}
\email{dler.naderi65@gmail.com}

%\dedicatory{ }

\begin{abstract}
Let $I$ be a matroidal ideal of degree $d$ of a polynomial ring $R=K[x_1,...,x_n]$, where $K$ is a field.
 Let $\astab(I)$ and $\dstab(I)$ be the smallest integers $m$ and $n$, for which $\Ass(I^m)$ and $\depth(I^n)$ stabilize, respectively.
  In this paper, we show that $\astab(I)=1$ if and only if $\dstab(I)=1$. Moreover, we prove that if $d=3$, then $\astab(I)=\dstab(I)$. Furthermore, we show that if $I$ is an almost square-free Veronese type ideal of degree $d$, then $\astab(I)=\dstab(I)=\lceil\frac{n-1}{n-d}\rceil$.
  \end{abstract}

%\thanks{This paper was done while the second author
%was visiting University of Sulaimani, Kurdistan Riegion of Iraq. He is thankful to the University
%for the hospitality. }

\subjclass[2010]{Primary 13A15; Secondary 13A30, 13C15}
%		13H10   	Special types (Cohen-Macaulay, Gorenstein, Buchsbaum, etc.)
%		13D02   	Syzygies, resolutions, complexes
%		05E40   	Combinatorial aspects of commutative algebra
%		16S36   	Ordinary and skew polynomial rings and semigroup rings

%		14M25   	Toric varieties, Newton polyhedra [See also 52B20]
%		13A02   	Graded rings
%		13F20   	Polynomial rings and ideals; rings of integer-valued polynomials
%		13A18   	Valuations and their generalizations
%		06A11   	Algebraic aspects of posets

\keywords{Polymatroidal ideal, depth and associated primes stability number}

\maketitle

%\setcounter{tocdepth}{1}
%\tableofcontents
\section*{Introduction}
Throughout this paper, we assume that $R=K[x_1,...,x_n]$ is the polynomial ring in $n$ variables over a field $K$ with the maximal ideal $\frak{m}=(x_1,...,x_n)$, $I$ a monomial ideal of $R$ and $G(I)$ the unique minimal monomial generators set of $I$. Let $\Ass(I)$ be the set of associated prime ideals of $R/I$. Brodmann \cite{B1} showed that there exists an integer $t_0$ such that $\Ass(I^t)=\Ass(I^{t_0})$ for all $t\geq t_0$. The smallest such integer $t_0$ is
called the {\it index of Ass-stability} of $I$,  and denoted by $\astab(I)$. Moreover,   $\Ass(I^{t_0})$ is called the  stable set of associated
prime ideals of $I$. It is denoted by $\Ass^{\infty}(I)$. Brodmann \cite{B} also showed that there exists an integer $t_0$ such that $\depth R/I^t=\depth R/I^{t_0}$ for all $t\geq t_0$. The smallest such integer $t_0$ is called the {\it index of depth stability} of $I$ and denoted by $\dstab(I)$.
The first author and Herzog \cite{HM} proved that if $n=3$ then, for any graded ideal $I$ of $R$, $\astab(I)=\dstab(I)$. Also, they showed that for $n=4$ the indices $\astab(I)$ and $\dstab(I)$ are unrelated.
Herzog, Rauf and Vladoiu \cite{HRV} showed that for every polymatroidal ideal of Veronese type $\astab(I)=\dstab(I)$ and for every transversal polymatroidal ideal
$\astab(I)=1=\dstab(I)$.
Herzog and Qureshi \cite{HQ} proved that if $I$ is a polymatroidal ideal of $R$, then $\astab(I),\dstab(I)<\ell(I)$, where $\ell(I)$ is the analytic spread of $I$, that is, the dimension of $\mathcal{R}(I)/{{\frak{m}}\mathcal{R}(I)}$, where $\mathcal{R}(I)$ denotes the Rees ring of $I$. Moreover, they conjectured that $\astab(I)=\dstab(I)$ for all polymatroidal ideals $I$. This conjecture does not have a positive answer in general, see \cite{KM} for counterexamples. The ideals which provide these counterexamples are polmatroidal ideals, but neither $\frak{m}\in\Ass^{\infty}(I)$ nor $I$ is matroidal. Thus it is an open question whether all matroidal ideals and all polymatroidal ideals with $\frak{m}\notin\Ass^{\infty}(I)$ satisfy the equality $\astab(I)=\dstab(I)$.

In this paper, we show that $\astab(I)=1$ if and only if $\dstab(I)=1$. Also, we prove that if $I$ is a matroidal ideal of degree $3$, then $\astab(I)=\dstab(I)$. Furthermore, if $I$ is a polymatroidal ideal of degree $3$ and $\frak{m}\notin\Ass^{\infty}(I)$, then $\astab(I)=\dstab(I)$. In the end, we show that if $I$ is an almost square-free Veronese type ideal of degree $d$, then $\astab(I)=\dstab(I)=\lceil\frac{n-1}{n-d}\rceil$.

For any unexplained notion or terminology, we refer the reader to \cite{HH3} or \cite{V}. Several explicit examples were performed with the help of the computer algebra system Macaulay2 \cite{GS}.

\section{Preliminaries}
In this section, we recall some definitions and known results which are used in this paper.
Let, as before, $K$ be a field and $R=K[x_1,\ldots,x_n]$ be the polynomial ring in $n$ variables over $K$ with each $\deg x_i=1$. For a monomial ideal $I$ of $R$ and $G(I)=\{u_1,\ldots,u_t\}$, we set $\supp(I)=\cup_{i=1}^t\supp(u_i)$, where $\supp(u)=\{x_i| u=x_1^{a_1}\ldots x_n^{a_n}, a_i\neq 0\}$ and we set $\gcd(I)=\gcd(u_1,\ldots,u_m)$. The linear relation graph $\Gamma_I$ associated to a monomial ideal is the graph whose vertex set $V(\Gamma_I)$ is a subset of $\{x_1,\ldots,x_n\}$ for which $\{x_i,x_j\}\in E(\Gamma_I)$ if and only if there exist $u_k,u_l\in G(I)$ such that $x_iu_k=x_ju_l$ (see \cite[Definition 3.1]{HQ}). We say that the monomial ideal $I$ is full-supported if $\supp(I)=\{x_1,\ldots,x_n\}$. The monomial localization of a monomial ideal $I$ with respect to a monomial prime ideal $\frak{p}$ is the monomial ideal $I(\frak{p})$ which is obtained from $I$ by substituting the variables $x_i\notin\frak{p}$ by $1$. The monomial localization $I(\frak{p})$ can also be described as the saturation $I:(\prod_{x_i\notin{\frak{p}}}x_i)^{\infty}$ and when $I$ is a square-free monomial ideal we see that $I(\frak{p})=I:(\prod_{x_i\notin{\frak{p}}}x_i)$. Let $\frak{p}$ be a monomial prime ideal of $R$. Then $\frak{p}=\frak{p}_A$ for some subset $A\subseteq\{1,\ldots,n\}$, where $\frak{p}_A=\lbrace x_i| i\notin A\rbrace$.

A monomial ideal $I$ is called a polymatroidal ideal, if it is generated in a single degree with the exchange property that for each two elements $u,v\in G(I)$ such that $\deg_{x_i}(u)>\deg_{x_i}(v)$ for some $i$, there exists an integer $j$ such that $\deg_{x_j}(u)<\deg_{x_j}(v)$ and $x_j(u/x_i)\in I$.
The polymatroidal ideal $I$ is called matroidal if $I$ is generated by square-free monomials (see \cite{HH1}). For a polymatroidal ideal $I$ one can compute the analytic spread as $\ell(I)=r-s+1$, where $r=|V(\Gamma_I)|$ and $s$ is the number of connected components of $\Gamma_I$ (see \cite[Lemma 4.2]{HQ}).

The product of polymatroidal ideals is again polymatroidal (see \cite[Theorem 5.3]{CH}). In particular each power of a polymatroidal ideal is polymatroidal. Also, $I$ is a polymatroidal ideal if and only if $(I:u)$ is a polymatroidal ideal for all monomials $u$ (see \cite[Theorem 1.1]{BH}). Furthermore, localizations of polymatroidal ideals at monomial prime ideals are again polymatroidal \cite[Corollary 3.2]{HRV}.
According to \cite{HQ} and \cite{HRV}, every polymatroidal ideal satisfying the persistence property and non-increasing depth functions, that is, if $I$ is a polymatroidal ideal then, for all $k$, there is the following sequences:
$\Ass(I^k)\subseteq\Ass(I^{k+1})$ and $\depth(R/I^{k+1})\leq\depth(R/I^k).$

In addition, every polymatroidal ideal is a normal ideal (see \cite[Theorem 3.4]{HRV}). One of the most distinguished polymatroidal ideals is the ideal of Veronese type. Consider the fixed positive integers $d$ and $1\leq a_1\leq ...\leq a_n\leq d$. The ideal of Veronese type of $R$ indexed by $d$ and $(a_1,\ldots,a_n)$ is the ideal $I_{(d;a_1,\ldots,a_n)}$ which is generated by those monomials $u=x_1^{i_1}\ldots x_n^{i_n}$ of $R$ of degree $d$ with $i_j\leq a_j$ for each $1\leq j\leq n$. Note that if $a_i=1$ for all $i$, then we use $I_{d;n}$ instead of $I_{(d;1,\ldots,1)}$.
 We say that $I$ is an almost square-free Veronese ideal of degree $d$ when $I\neq 0$, $G(I)\subseteq G(I_{d;n})$ and $\mid{G(I)}\mid\geq \mid{G(I_{d;n})}\mid-1$ (see \cite{JMS}).

Herzog and Vladoiu \cite{HV} proved the following interesting results about matroidal ideals.
\begin{Theorem}\label{T00} Let $I$ be a matroidal ideal of $R$ generated in degree $d$, and denote as before by $s$ the number of connected components of $\Gamma_I$. Let $I$ be full-supported and $\gcd(I)=1$. The following statements hold:
\begin{itemize}
\item[(i)] $s\leq d$. In addition, $V(\Gamma_I)=\{x_1,...,x_n\}$ and $s=d$ if and only if $\dstab(I)=1$;
\item[(ii)] $I\subseteq {\frak{p}_1}\cap\ldots\cap{\frak{p}_s},$ where $\frak{p}_1,\ldots,{\frak{p}_s}$ are the monomial prime ideals generated by the sets of vertices of the connected components $ \Gamma_1,\ldots,\Gamma_s$ of $\Gamma_I$;
\item[(iii)] $\dstab(I)=1$ if and only if $I={\frak{p}_1}\ldots{\frak{p}_d}$, where ${\frak{p}_1},\ldots,{\frak{p}_d}$ are monomial prime ideals in pairwise disjoint sets of variables.
\end{itemize}
\end{Theorem}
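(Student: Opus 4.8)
The plan is to recast the whole statement in matroid-theoretic language. Since $I$ is matroidal of degree $d$, its minimal generators are the monomials $x_B=\prod_{i\in B}x_i$ indexed by the bases $B$ of a matroid $M$ on $\{1,\dots,n\}$ of rank $d$, and the exchange property of $G(I)$ is precisely the basis exchange axiom. Full support means $M$ has no loops and $\gcd(I)=1$ means $M$ has no coloops, so every element lies in some basis and outside some basis. Write $M=M_1\oplus\cdots\oplus M_s$ for the decomposition into connected components, with ground sets $E_1,\dots,E_s$ partitioning $\{1,\dots,n\}$ and ranks $d_i=\rank M_i$; then $\sum_i d_i=d$, each $|E_i|\ge 2$, and $1\le d_i\le|E_i|-1$. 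The first key step is to show that the connected components of $\Gamma_I$ are exactly $E_1,\dots,E_s$ and that $V(\Gamma_I)=\{x_1,\dots,x_n\}$. I would first check that an edge $\{x_i,x_j\}$ of $\Gamma_I$ corresponds to a single basis exchange: the relation $x_iu_k=x_ju_l$ with $u_k,u_l$ squarefree forces $i\notin B_k$, $j\in B_k$ and $B_l=(B_k\setminus\{j\})\cup\{i\}$. Such an exchange places $i,j$ in a common fundamental circuit, hence in the same component of $M$; conversely two elements of one matroid component are linked by a chain of single exchanges, since the transitive closure of the single-exchange relation is the component partition (a standard matroid fact). As no element is a loop or a coloop, each both enters and leaves some basis, so each is incident to an edge; thus $V(\Gamma_I)=\{x_1,\dots,x_n\}$, $r=n$, and by \cite[Lemma 4.2]{HQ} the analytic spread is $\ell(I)=n-s+1$.

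From here parts (i) and (ii) are immediate. For (i), $d=\sum_i d_i\ge s$ because each $d_i\ge 1$, giving $s\le d$, and $V(\Gamma_I)=\{x_1,\dots,x_n\}$ was just shown. For (ii), every basis $B$ meets each $E_i$ (since $d_i\ge1$), so $x_B$ lies in $\frak p_i$, the prime generated by the variables in $E_i$, for every $i$; hence $I\subseteq\frak p_1\cap\cdots\cap\frak p_s$.

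The crux, and what I expect to be the main obstacle, is the depth identity $\depth R/I=d-1$ for every such $I$. The route is the product decomposition $I=\prod_{i=1}^{s}I_{M_i}$, where $I_{M_i}\subseteq K[E_i]$ is the connected matroidal ideal of $M_i$. For monomial ideals $J,L$ in disjoint variable sets, with ambient ring $S$ and coefficient subrings $A,B$, one has $JS\cap LS=JL$, so the short exact sequence $0\to S/JL\to S/JS\oplus S/LS\to S/(JS+LS)\to 0$ together with the depth lemma yields $\depth S/JL=\depth A/J+\depth B/L+1$; iterating gives $\depth R/I=\sum_i\depth\bigl(K[E_i]/I_{M_i}\bigr)+(s-1)$. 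It then remains to settle the connected case $\depth\bigl(K[E_i]/I_{M_i}\bigr)=d_i-1$, which I would attack by induction on the number of variables via $0\to R/(I:x_t)\to R/I\to R/(I,x_t)\to 0$, identifying $(I:x_t)$ with the contraction ideal $I_{M/t}$ (of rank $d_i-1$) and $R/(I,x_t)$ with the deletion ideal $I_{M\setminus t}$; the delicate point is controlling the loops and coloops that deletion and contraction may create so that the inductive hypothesis applies. Granting this, $\depth R/I=\sum_i(d_i-1)+(s-1)=d-1$.

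Finally I would assemble the equivalences. For polymatroidal ideals the depth function is non-increasing and stabilizes at the value $n-\ell(I)=s-1$ (\cite{HQ,HRV}); hence $\dstab(I)=1$ if and only if $\depth R/I$ already equals $s-1$, that is, if and only if $d-1=s-1$, i.e.\ $s=d$. This gives the outstanding equivalence in (i). For (iii), $s=d$ forces every $d_i=1$, so each $M_i$ is the rank-one matroid on $E_i$ whose bases are the singletons, i.e.\ $I_{M_i}=\frak p_i$; therefore $\dstab(I)=1$ if and only if $I=\frak p_1\cdots\frak p_d$ with the $\frak p_i$ supported on the pairwise disjoint sets $E_1,\dots,E_d$, and conversely any such product is a transversal polymatroidal ideal, which has $\dstab=1$ by \cite{HRV}.
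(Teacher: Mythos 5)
A preliminary remark: this paper does not prove the statement at all; Theorem \ref{T00} is quoted from Herzog--Vladoiu \cite{HV}, so there is no internal proof to compare you against, and your proposal has to stand on its own.

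Most of it does. The identification of edges of $\Gamma_I$ with single basis exchanges is correct (the degree count in $x_iu_k=x_ju_l$ forces $B_l=(B_k\setminus\{j\})\cup\{i\}$), and the matching of components of $\Gamma_I$ with the connected components of the matroid $M$ is right, though the converse direction deserves a line you only wave at: for $i,j$ in a common circuit $C$, extend $C\setminus\{i\}$ to a basis $B$; then $(B\setminus\{j\})\cup\{i\}$ is again a basis, so ``same circuit'' really is the single-exchange relation, and the standard fact that the circuit relation generates the component partition finishes it. With no loops and no coloops every variable lies in a circuit, so $V(\Gamma_I)=\{x_1,\ldots,x_n\}$, and then (i)'s inequality, (ii), and the reduction of both $\dstab$ equivalences to the two depth facts --- $\lim_t\depth R/I^t=n-\ell(I)=s-1$ with non-increasing depth \cite{HQ,HRV}, plus $\depth R/I=d-1$ --- are all sound, as is the derivation of (iii) from $s=d$ via $d_i=1$ and the transversal case of \cite{HRV}.

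The genuine gap is exactly where you flagged it: the identity $\depth\bigl(K[E_i]/I_{M_i}\bigr)=d_i-1$ for connected matroidal ideals, which is the quantitative engine of the whole argument, is never proved. Your deletion--contraction induction via $0\to R/(I:x_t)\to R/I\to R/(I,x_t)\to 0$ runs into precisely the difficulty you then ``grant'': the contraction $M/t$ acquires loops whenever $t$ has parallel elements (so $I:x_t$ is not full-supported on $E_i\setminus\{t\}$ and contributes free variables to the depth count), and the deletion $M\setminus t$ can disconnect or acquire coloops, so the inductive hypothesis, stated for connected loopless coloopless matroids, applies to neither term without a careful bookkeeping of these degeneracies --- and that bookkeeping is not routine; it is the actual content of the theorem. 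As written, the proof assumes its crux. The shortest repair is to discard the decomposition-plus-induction entirely and invoke Chiang-Hsieh \cite[Theorem 2.5]{C}, quoted in Section 2 of this paper: any full-supported matroidal ideal of degree $d$ satisfies $\depth R/I=d-1$, connectivity not required. With that citation your chain closes: $\dstab(I)=1$ iff $\depth R/I=s-1$ iff $s=d$, and (iii) follows as you wrote it.
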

From Theorem 1.1 (iii) one can conclude that for all full-supported matroidal ideals with $\gcd(I)=1$ if $\dstab(I)=1$, then $\astab(I)=1$ (see \cite[Theorem 2.5]{HV} and \cite[Corollary 4.6]{HRV}).

\section{The results}
Throughout this section, we assume that $I$ is a full-supported monomial ideal and $gcd(I)=1$.

\begin{Lemma}\label{L1}
Let $I$ be a polymatroidal ideal of degree $d\geq 3$ and for all $i$, $I(\frak{p}_{\{i\}})=\frak{p}_{i_1}\cap\frak{p}_{i_2}\cap\ldots\cap\frak{p}_{i_{d-1}}$, where $G(\frak{p}_{i_j})\cap G(\frak{p}_{i_k})=\emptyset$ for all $1\leq j\neq k\leq d-1$. Then $I=\frak{p}_1\cap\frak{p}_2\cap\ldots\cap\frak{p}_d$, where $G(\frak{p}_r)\cap G(\frak{p}_s)=\emptyset$ for all $1\leq r\neq s\leq d$.
\end{Lemma}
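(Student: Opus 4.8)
The plan is to first upgrade $I$ from polymatroidal to matroidal, then to read the hypothesis as a statement about single-element contractions of the matroid attached to $I$, and finally to run a short circuit analysis in which the restriction $d\geq 3$ enters decisively.

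First I would show that $I$ is in fact square-free. For each $i$ the ideal $I(\frak p_{\{i\}})=\frak p_{i_1}\cap\cdots\cap\frak p_{i_{d-1}}$ is an intersection of monomial primes on pairwise disjoint variable sets, so its minimal generators are exactly the square-free products obtained by choosing one variable from each of the $d-1$ classes; in particular $I(\frak p_{\{i\}})$ is generated in degree $d-1$, and every nonzero monomial in it has degree at least $d-1$. Since $I(\frak p_{\{i\}})$ is obtained from $I$ by the substitution $x_i\mapsto 1$, the image of any $u\in G(I)$ is the nonzero monomial $u|_{x_i=1}$ of degree $d-\deg_{x_i}(u)$, and it lies in $I(\frak p_{\{i\}})$; hence $d-\deg_{x_i}(u)\geq d-1$, i.e.\ $\deg_{x_i}(u)\leq 1$. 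As $i$ and $u$ are arbitrary, every generator is square-free and $I$ is matroidal.

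Next I would pass to the matroid $M$ on $[n]$ whose bases are the supports of the generators of $I$ (the standard dictionary for matroidal ideals). Since $\gcd(I)=1$ and $I$ is full-supported, $M$ has neither loops nor coloops, and the desired conclusion $I=\frak p_1\cap\cdots\cap\frak p_d$ on pairwise disjoint variable sets is precisely the assertion that $M$ is a partition matroid, a direct sum of $d$ rank-one uniform matroids whose parts are the supports of the $\frak p_r$. The substitution $x_i\mapsto 1$ corresponds to the contraction $M/i$, and $I(\frak p_{\{i\}})$ is the matroidal ideal of $M/i$; the hypothesis thus says that $M/i$ is a partition matroid for every $i$. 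So the whole problem reduces to the matroid statement: \emph{if every single-element contraction $M/i$ is a partition matroid, then $M$ itself is a partition matroid}, equivalently $M$ has no circuit of size $\geq 3$.

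The core of the proof, and the step I expect to be the main obstacle, is this circuit analysis, and it is exactly here that $d\geq 3$ is needed. Suppose $M$ had a circuit $C$ with $|C|\geq 3$. If $C$ does not span $M$, then $\overline{C}$ is a proper flat, so I can choose $i\notin\overline{C}$; a direct rank computation shows that $C$ remains a circuit of the same size $\geq 3$ in $M/i$, contradicting that the circuits of the partition matroid $M/i$ all have two elements. If instead $C$ spans $M$, then $|C|=d+1\geq 4$; contracting an element $i\in C$ makes $C\setminus i$ dependent in $M/i$, hence it contains a circuit of $M/i$. That circuit cannot be a loop $\{a\}$ (which would make $\{a,i\}$ a two-element circuit of $M$ sitting inside $C$), so it is a parallel pair $\{a,b\}$; but then $\{a,b,i\}$ has rank $2$ in $M$ while being a \emph{proper} subset of the circuit $C$, forcing it to be independent of rank $3$---a contradiction. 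Therefore $M$ has only two-element circuits, i.e.\ $M$ is a partition matroid, and translating back yields $I=\frak p_1\cap\cdots\cap\frak p_d$ with the $\frak p_r$ supported on pairwise disjoint variable sets. I would finally note that the same argument can be phrased without naming matroids---replacing $M/i$ by the localization, bases by generator supports, and circuits by minimal subsets of $[n]$ contained in no generator support---but the matroid language keeps the case analysis transparent.
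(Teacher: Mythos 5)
Your proof is correct, and it takes a genuinely different route from the paper's. The paper never leaves the language of prime decompositions: after relabeling so that $I(\frak{p}_{\{1\}})=\frak{p}_1\cap\cdots\cap\frak{p}_{d-1}$ with $\frak{p}_t=(x_{k_t+1},\ldots,x_{k_{t+1}})$, it supposes the conclusion fails, writes $I=\frak{p}_1\cap\cdots\cap\frak{p}_d\cap\frak{p}_{d+1}\cap\cdots\cap\frak{p}_{d+r}$ with $r\geq 1$, and derives a contradiction by tracking which variables can lie in which primes through successive localizations at single variables ($x_{k_1+1}$, then $x_{k_2+1}$, $x_{k_2+2}$). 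Two features distinguish your argument. First, your opening degree count ($d-\deg_{x_i}(u)\geq d-1$, hence $\deg_{x_i}(u)\leq 1$) proves that $I$ is square-free; the paper tacitly assumes this when it writes $I$ as an intersection of monomial primes (an intersection of primes is radical), so your step makes explicit something the paper glosses over. Second, you replace the paper's variable bookkeeping with matroid structure theory: the hypothesis becomes ``every single-element contraction $M/i$ is a partition matroid,'' and your two-case circuit analysis (a non-spanning circuit survives contraction by an element outside its closure; a spanning circuit has $d+1\geq 4$ elements, so the rank-at-most-two set $\{a,b,i\}$ is a proper, hence independent, subset of it) pinpoints exactly where $d\geq 3$ enters --- and simultaneously explains why the degree-$2$ example given right after the lemma is the precise obstruction. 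What the paper's route buys is self-containedness: no matroid dictionary is needed, only monomial localization. If you polish your write-up, add one line justifying that $I(\frak{p}_{\{i\}})$ is indeed the matroidal ideal of the contraction $M/i$ (symmetric exchange shows every generator $x^{B'}$ with $i\notin B'$ is divisible by some $x^{B\setminus\{i\}}$ with $i\in B$), and state explicitly that $M/i$ may have loops --- the variables missing from the blocks of $I(\frak{p}_{\{i\}})$ --- a point your spanning case already handles correctly when it excludes the loop circuit $\{a\}$.
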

\begin{proof}
Suppose that $I(\frak{p}_{\{i\}})=\frak{p}_{i_1}\cap\frak{p}_{i_2}\cap\ldots\cap\frak{p}_{i_{d-1}}$, where $G(\frak{p}_{i_j})\cap G(\frak{p}_{i_k})=\emptyset$ for all $1\leq j\neq k\leq d-1$. We may assume that
$I(\frak{p}_{\{1\}})=I(\frak{p}_{\{1,\ldots,k_1\}})$ for some $k_1\geq 1$. Since $G(\frak{p}_{i_j})\cap G(\frak{p}_{i_k})=\emptyset$ for all $1\leq j\neq k\leq d-1$, it follows that $x_1,\ldots,x_{k_1}\notin G(\frak{p}_{i_j})$ for all $1\leq j\leq d-1$. Without lose of generality of the proof by new labeling we may assume that $I(\frak{p}_{\{1\}})=I(\frak{p}_{\{1,\ldots,k_1\}})=\frak{p}_1\cap\frak{p}_2\cap\ldots\cap\frak{p}_{d-1}$ such that $G(\frak{p}_j)\cap G(\frak{p}_k)=\emptyset$ for all $1\leq j\neq k\leq d-1$ where $\frak{p}_t=(x_{{k_t}+1},\ldots,x_{k_{t+1}})$ for all $1\leq t\leq d-1$ such that $x_{k_d}=x_n$. We claim that $I=\frak{p}_1\cap\frak{p}_2\cap\ldots\cap\frak{p}_{d}$, where $G(\frak{p}_i)\cap G(\frak{p}_j)=\emptyset$ for all $1\leq i\neq j\leq d$ and $\frak{p}_d=(x_1,\ldots,x_{k_1})$. We assume that the claim is not true. Therefore
$I=\frak{p}_1\cap\frak{p}_2\cap\ldots\cap\frak{p}_{d}\cap\frak{p}_{d+1}\cap\ldots\cap\frak{p}_{d+r}$ such that $r\geq 1$, $G(\frak{p}_j)\cap G(\frak{p}_k)=\emptyset$ for all $1\leq j\neq k\leq d-1$ and $x_s\in\frak{p}_{d}\cap\frak{p}_{d+1}\cap\ldots\cap\frak{p}_{d+r}$ for all $s\in\{1,\ldots,k_1\}$. Indeed, if $x_s\notin \frak{p}_{d}\cap\frak{p}_{d+1}\cap\ldots\cap\frak{p}_{d+r}$ for some $s\in\{1,\ldots,k_1\}$, then $x_s\notin\frak{p}_j$ for some $j\in\{d,\ldots,d+r\}$. Thus $I(\frak{p}_{\{s\}})\subseteq\frak{p}_1\cap\frak{p}_2\cap\ldots\cap\frak{p}_{d-1}\cap\frak{p}_{j}$ and this is a contradiction. Now, by our assumption there is $m\in\{0,1,\ldots,r\}$ such that $I(\frak{p}_{\{k_1+1\}})=\frak{p}_2\cap\frak{p}_3\cap\ldots\cap\frak{p}_{d-1}\cap\frak{p}_{d+m}$,
where $G(\frak{p}_j)\cap G(\frak{p}_k)=\emptyset$  and $G(\frak{p}_j)\cap G(\frak{p}_{d+m})=\emptyset$ for all $2\leq j\neq k\leq d-1$. Suppose that $m=0$ and so in this case $I(\frak{p}_{\{k_1+1\}})=\frak{p}_2\cap\frak{p}_3\cap\ldots\cap\frak{p}_{d-1}\cap\frak{p}_{d}$. Hence $x_{k_1+1}\in\frak{p}_{d+1}\cap\ldots\cap\frak{p}_{d+r}$ and there exists $i\in\{2,\ldots,k_2-k_1\}$ such that $x_{k_1+i}\in\frak{p}_d$, since otherwise we have $\frak{p}_d=(x_1,\ldots,x_{k_1})$ and this is our claim. Also, there exists $i\in\{1,2,\ldots,k_3-k_2\}$ such that $I(\frak{p}_{\{k_2+i\}})=\frak{p}_1\cap\frak{p}_3\cap\ldots\cap\frak{p}_{d-1}\cap\frak{p}_d$, since otherwise $\frak{p}_d=(x_1,\ldots,x_{k_1},x_{k_2+1},\ldots,x_{k_3})$ and this is impossible because in this case $\frak{p}_2\subseteq\frak{p}_d$. Suppose that $i=1$ and so $I(\frak{p}_{\{k_2+1\}})=\frak{p}_1\cap\frak{p}_3\cap\ldots\cap\frak{p}_{d-1}\cap\frak{p}_d$. Thus there exists $j\in\{2,\ldots,k_3-k_2\}$ such that $x_{k_2+j}\in \frak{p}_d$, since otherwise $\frak{p}_d=(x_1,\ldots,x_{k_1})$ and this is impossible. Suppose that $j=2$ and so $x_{k_2+2}\in\frak{p}_d$. Therefore $I(\frak{p}_{\{k_1+1\}})=\frak{p}_2\cap\frak{p}_3\cap\ldots\cap\frak{p}_{d-1}\cap\frak{p}_d$ such that $x_{k_2+2}\in\frak{p}_2\cap\frak{p}_d$ and this is impossible. Hence the claim is true and this completes the proof.
\end{proof}

The following example says that if $d=2$, then the above lemma does not hold.

\begin{Example} Let $I=(xz,xu,xv,xw,yz,yu,yv,yw,zv,zw,uv,uw)=(x,y,z,u)\cap(z,u,v,w)\cap(x,y,v,w)$
 be a matroidal ideal of degree $2$ such that
$I(\frak{p}_{\{x\}})=I(\frak{p}_{\{y\}})=(z,u,v,w), I(\frak{p}_{\{z\}})=I(\frak{p}_{\{u\}})=(x,y,v,w)$ and $I(\frak{p}_{\{v\}})=I(\frak{p}_{\{w\}})=(x,y,z,u)$. But there are no prime ideals $\frak{p}_1,\frak{p}_2$ such that $I=\frak{p}_1\cap\frak{p}_2$ and $G(\frak{p}_1)\cap G(\frak{p}_2)=\emptyset$.
\end{Example}

\begin{Theorem}\label{T1}
Let $I$ be a matroidal ideal of degree $d$. Then $\astab(I)=1$ if and only if $\dstab(I)=1$. In particular, $I=\frak{p}_1\frak{p_2}\ldots\frak{p}_d$ where $G(\frak{p}_i)\cap G(\frak{p}_j)=\emptyset$ for all $1\leq i\neq j\leq d$.
\end{Theorem}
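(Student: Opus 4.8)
The plan is to prove the two implications separately and to read off the structural conclusion from Theorem~\ref{T00}. The implication $\dstab(I)=1\Rightarrow\astab(I)=1$, together with the factorization $I=\frak{p}_1\cdots\frak{p}_d$ into primes on pairwise disjoint variable sets, is already recorded in the remark following Theorem~\ref{T00} and in Theorem~\ref{T00}(iii), so nothing new is needed in that direction. All the work lies in proving $\astab(I)=1\Rightarrow\dstab(I)=1$. By Theorem~\ref{T00}(i) this is equivalent to showing that the number $s$ of connected components of $\Gamma_I$ equals $d$, and I would prove this by induction on $d$.

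For the base cases, $d=1$ forces $I=\frak{m}$ and the statement is trivial. For $d=2$ I cannot use Lemma~\ref{L1}, since it genuinely fails in degree $2$ (cf.\ the example following it), so I argue via the analytic spread. As $I$ is square-free it is radical, hence $\Ass(I)=\Min(I)$; since $d\geq 2$ this set omits $\frak{m}$, and because $\astab(I)=1$ we get $\frak{m}\notin\Ass^{\infty}(I)$. For a normal ideal this is equivalent to $\ell(I)<n$. Under our standing assumptions $I$ has no loops and no coloops, so every variable is a vertex of $\Gamma_I$ and $\ell(I)=r-s+1=n-s+1$; thus $s\geq 2$, and combined with $s\leq d=2$ from Theorem~\ref{T00}(i) this gives $s=2=d$, i.e.\ $\dstab(I)=1$.

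For the inductive step $d\geq 3$, I would pass to the monomial localizations $J_i:=I(\frak{p}_{\{i\}})$ and verify the hypotheses needed to invoke the induction hypothesis. By \cite[Corollary 3.2]{HRV} each $J_i$ is polymatroidal, and being built from square-free generators it is matroidal; its minimal generators are exactly $\{w/x_i : w\in G(I),\ x_i\mid w\}$, so using the (symmetric) exchange property of polymatroids one checks that $J_i$ is generated in the single degree $d-1$, is full-supported on $\{x_j : j\neq i\}$, and has $\gcd(J_i)=1$. Moreover $\astab(J_i)=1$: monomial localization commutes with powers, $(J_i)^t=(I^t)(\frak{p}_{\{i\}})$, and the associated primes of a monomial localization are precisely those associated primes of the original ideal contained in $\frak{p}_{\{i\}}$; since $\Ass(I^t)=\Ass(I)$ for all $t$, the same stability descends to $J_i$. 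By the induction hypothesis $\dstab(J_i)=1$, so Theorem~\ref{T00}(iii) yields $J_i=\frak{p}_{i_1}\cap\cdots\cap\frak{p}_{i_{d-1}}$ with the $\frak{p}_{i_j}$ supported on pairwise disjoint sets of variables.

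This is exactly the hypothesis of Lemma~\ref{L1}, which then gives $I=\frak{p}_1\cap\cdots\cap\frak{p}_d$ with pairwise disjoint supports; since monomial primes on disjoint variables intersect in their product, $I=\frak{p}_1\cdots\frak{p}_d$, and Theorem~\ref{T00}(iii) gives $\dstab(I)=1$, closing the induction. I expect the main obstacle to be the bookkeeping that transfers every hypothesis to the localizations $J_i$ — specifically that each $J_i$ is again generated in the single degree $d-1$ with $\gcd(J_i)=1$ and $\astab(J_i)=1$ — because it is precisely these properties that make the inductive output fit the hypothesis of Lemma~\ref{L1}. The $d=2$ base case is the other delicate point, since Lemma~\ref{L1} is unavailable there and one must instead invoke the characterization of $\frak{m}\in\Ass^{\infty}(I)$ in terms of the analytic spread.
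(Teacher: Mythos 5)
Your proof is correct and follows essentially the same route as the paper's: induction on $d$, transferring $\astab(I)=1$ to the monomial localizations $I(\frak{p}_{\{i\}})$, applying the induction hypothesis and Theorem \ref{T00} to factor each localization, then invoking Lemma \ref{L1} and Theorem \ref{T00} once more; the only difference is that you argue the degree-$2$ base case (via analytic spread and normality) and the localization bookkeeping by hand, where the paper simply cites \cite[Theorem 2.12]{KM} and \cite[Propositions 2.8, 2.9]{KM}. The one slip is your claim that $I(\frak{p}_{\{i\}})$ is full-supported on $\{x_j : j\neq i\}$ --- variables parallel to $x_i$ in the underlying matroid drop out of the support (e.g.\ for $I=(x_1x_3,x_1x_4,x_2x_3,x_2x_4,x_3x_4)$ one has $I(\frak{p}_{\{1\}})=(x_3,x_4)$, which misses $x_2$) --- but this is harmless, since one applies the induction hypothesis to the localization viewed in the polynomial ring on its own support, exactly as the paper implicitly does.
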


\begin{proof}
If $\dstab(I)=1$, then by Theorem \ref{T00} there is nothing to prove.
Conversely, we use induction on $d$. If $d=2$, then by \cite[Theorem 2.12]{KM} the result follows.
Suppose that $d>2$ and that the result has been proved for smaller values of $d$. Since $\astab(I)=1$, we have $\Ass(I)=\Ass^{\infty}(I)$ and since $I$ is a matroidal ideal, it follows that $\frak{m}\notin\Ass^{\infty}(I)$. Hence by \cite[Propositions 2.8, 2.9]{KM} and the inductive hypothesis,  $1=\astab(I)=\astab(I(\frak{p}_{\{i\}}))=\dstab(I(\frak{p}_{\{i\}}))$. Since $\dstab(I(\frak{p}_{\{i\}}))=1$, by Theorem \ref{T00} we have $I(\frak{p}_{\{i\}})=\frak{p}_{i_1}\frak{p}_{i_2}\ldots\frak{p}_{i_{d-1}}$ for all $i$ such that $G(\frak{p}_{i_j})\cap G(\frak{p}_{i_k})=\emptyset$ for all $1\leq j\neq k\leq d-1$. Now by Lemma \ref{L1}, $I=\frak{p}_1\frak{p}_2\ldots\frak{p}_d$ such that $G(\frak{p}_i)\cap G(\frak{p}_j)=\emptyset$ for all $1\leq i\neq j\leq d$. Again by using Theorem \ref{T00} the result follows.
\end{proof}

In the following result, let $s(I)$ be the number of connected components of $\Gamma_I$ and we denote $u[j_i]$ the monomial $x_{j_1}\ldots\widehat{x_{j_i}}\ldots x_{j_t}$, where the term $x_{j_i}$ of $u=x_{j_1}\ldots{x_{j_i}}\ldots x_{j_t}$ is omitted and $t\leq n$.
\begin{Lemma}\label{L2}
Let $I$ be a matroidal ideal of degree $d$. Then $s(I(\frak{p}_{\{k\}})\geq s(I)$.
\end{Lemma}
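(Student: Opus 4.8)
The plan is to compare the partitions of the variables induced by the connected components of $\Gamma_I$ and of $\Gamma_{I(\frak{p}_{\{k\}})}$, the engine being an inclusion of edge sets. First I would pin down the minimal generators of the localization. Since localizations of matroidal ideals at monomial primes are again matroidal (hence generated in a single degree) by \cite[Corollary 3.2]{HRV}, and since $x_k\in\supp(I)$ (as $I$ is full-supported) forces an element of degree $d-1$ in $I(\frak{p}_{\{k\}})$, the ideal $I(\frak{p}_{\{k\}})$ is matroidal of degree $d-1$ with
\[
  G(I(\frak{p}_{\{k\}}))=\{\,u/x_k : u\in G(I),\ x_k\mid u\,\};
\]
indeed every image of a generator $u$ with $x_k\nmid u$ has degree $d$ and is absorbed by one of the degree-$(d-1)$ generators above.

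The key step is the inclusion $E(\Gamma_{I(\frak{p}_{\{k\}})})\subseteq E(\Gamma_I)$. If $\{x_i,x_j\}$ is an edge of $\Gamma_{I(\frak{p}_{\{k\}})}$, then $x_i a=x_j b$ for some $a=u/x_k,\ b=v/x_k$ with $u,v\in G(I)$ and $x_k\mid u,v$; multiplying by $x_k$ gives $x_iu=x_jv$ with $u,v\in G(I)$, so $\{x_i,x_j\}\in E(\Gamma_I)$ (and $i,j\neq k$). Consequently every path in $\Gamma_{I(\frak{p}_{\{k\}})}$ is a path in $\Gamma_I$, so two variables lying in a common component of $\Gamma_{I(\frak{p}_{\{k\}})}$ already lie in a common component of $\Gamma_I$. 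In short, passing to the localization can only split components, never merge them.

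To convert ``no merging'' into the inequality I would use that $V(\Gamma_I)=\{x_1,\dots,x_n\}$ under the standing hypotheses. Because $\gcd(I)=1$, some $v\in G(I)$ is not divisible by $x_k$; applying the exchange property to a generator $u$ with $x_k\mid u$ and to $v$, in the variable $x_k$, yields $j\neq k$ with $x_j(u/x_k)\in G(I)$, and then $x_k\cdot\big(x_j(u/x_k)\big)=x_j u$ exhibits the edge $\{x_k,x_j\}$; running this over all variables shows each variable is a vertex of $\Gamma_I$. Thus the components $\Gamma_1,\dots,\Gamma_s$ partition all of $\{x_1,\dots,x_n\}$, and if $x_k\in\Gamma_1$ then $\Gamma_1$ contains the neighbour $x_j$, so $\Gamma_1\setminus\{x_k\}\neq\emptyset$. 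Deleting the single vertex $x_k$ leaves the $s$ nonempty blocks $\Gamma_1\setminus\{x_k\},\Gamma_2,\dots,\Gamma_s$ on the remaining variables; since the components of $\Gamma_{I(\frak{p}_{\{k\}})}$ refine this partition by the edge inclusion, their number is at least $s$, giving $s(I(\frak{p}_{\{k\}}))\geq s(I)$.

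The point demanding the most care is the counting in the final step: localizing at $\frak{p}_{\{k\}}$ may drop variables from the support of $I(\frak{p}_{\{k\}})$ (these are precisely the elements becoming loops of the contracted matroid on $\supp(I)$), and for the inequality to hold each such variable must be recorded as a trivial component -- as one sees already for a product $I=\frak{p}_1\cdots\frak{p}_d$ of primes in disjoint variables. This is exactly where $\gcd(I)=1$ is indispensable: it guarantees that $x_k$ is not a coloop, so that the component of $\Gamma_I$ carrying $x_k$ does not vanish but at worst breaks apart, which is what the refinement count faithfully tracks. I expect verifying this edge inclusion together with the correct bookkeeping of the dropped variables to be the main obstacle, while the exchange-property computation showing $V(\Gamma_I)=\{x_1,\dots,x_n\}$ is routine.
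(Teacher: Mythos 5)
The engine of your argument --- the edge inclusion $E(\Gamma_{I(\frak{p}_{\{k\}})})\subseteq E(\Gamma_I)$, obtained by multiplying a linear relation of $I(\frak{p}_{\{k\}})$ by $x_k$ --- is exactly the paper's proof; in fact it is the paper's \emph{entire} proof, which stops there. You correctly sensed that this inclusion alone does not yield the inequality of component numbers, but your way of closing that hole is where a genuine gap lies. Your final count needs every block of the partition $\Gamma_1\setminus\{x_k\},\Gamma_2,\dots,\Gamma_s$ to contain at least one \emph{vertex} of $\Gamma_{I(\frak{p}_{\{k\}})}$; since the vertex set of a linear relation graph consists only of the variables lying on some edge, this can fail, and your repair (``$\gcd(I)=1$ guarantees $x_k$ is not a coloop, so the component of $\Gamma_I$ carrying $x_k$ does not vanish but at worst breaks apart'') is false. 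Take $n=4$, $I=(x_1,x_2)(x_3,x_4)=(x_1x_3,x_1x_4,x_2x_3,x_2x_4)$ and $k=1$: this is matroidal of degree $2$, full-supported, $\gcd(I)=1$, and $x_1$ is not a coloop (the basis $\{x_2,x_3\}$ avoids it). Here $\Gamma_I$ has exactly the two edges $\{x_1,x_2\}$ and $\{x_3,x_4\}$, so $s(I)=2$; but $I(\frak{p}_{\{1\}})=(x_3,x_4)$, the variable $x_2$ being a loop of the contracted matroid, so $\Gamma_{I(\frak{p}_{\{1\}})}$ is the single edge $\{x_3,x_4\}$ and $s(I(\frak{p}_{\{1\}}))=1<2=s(I)$. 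Thus the component containing $x_k$ \emph{does} vanish, your counting step collapses, and in fact the lemma as literally stated is false, so no argument can complete it. (The same example shows the paper's own one-line proof is incomplete: the edge inclusion holds there, yet the conclusion fails.)

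What you add beyond the paper, and what is correct and genuinely needed, is the observation that under the standing hypotheses $V(\Gamma_I)=\{x_1,\dots,x_n\}$; your exchange-property argument for this is sound. With it, the statement becomes true --- and your refinement count becomes valid --- once $s(I(\frak{p}_{\{k\}}))$ is interpreted as the number of components of the graph on the \emph{full} vertex set $\{x_i: i\neq k\}$, with variables isolated after localization (the loops of the contraction, such as $x_2$ above) counted as trivial components: each block $\Gamma_1\setminus\{x_k\},\Gamma_2,\dots,\Gamma_s$ is then nonempty, so deleting the vertex $x_k$ from $\Gamma_I$ leaves at least $s(I)$ components, and deleting further edges only increases their number. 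Equivalently, the inequality does hold whenever $V(\Gamma_{I(\frak{p}_{\{k\}})})=\{x_i: i\neq k\}$, and this is precisely the situation in which the paper uses the lemma: in Propositions \ref{P2} and \ref{P6} one has $\ell(I(\frak{p}_{\{k\}}))=n-1$, which by \cite[Lemma 4.2]{HQ} forces $\Gamma_{I(\frak{p}_{\{k\}})}$ to be connected on all $n-1$ remaining variables, and then the edge inclusion together with $V(\Gamma_I)=\{x_1,\dots,x_n\}$ gives $s(I)=1$ as required there. So your write-up contains all the ingredients of a correct proof of the corrected statement, but as a proof of the stated lemma it has an unfixable gap at the final count.
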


\begin{proof}
It is enough to prove that every edge of graph $\Gamma_{I(\frak{p}_{\{k\}})}$ is an edge of graph $\Gamma_{I}$.
Suppose that $\{x_i,x_j\}\in E(\Gamma_{I(\frak{p}_{\{k\}})})$. Then there exists $u[k],v[k]\in G(I(\frak{p}_{\{k\}}))$
such that $x_iu[k]=x_jv[k]$. Thus $x_iu[k]x_k=x_jv[k]x_k$ and so $x_iu=x_jv$. Therefore $\{x_i,x_j\}\in E(\Gamma_{I})$. This completes the proof.
\end{proof}
\begin{Proposition}\label{P2}
Let $I$ be a polymatroidal ideal of degree $d$ and also $I(\frak{p}_{\{k\}})$ be a polymatroidal of degree $d-1$ in $k[x_1,\ldots,\widehat{x_k},\ldots\ldots,x_n]$ for some $k$. If $\frak{p}_{\{k\}}\in\Ass^{\infty}(I_{\{k\}})$, then $\frak{m}\in\Ass^{\infty}(I)$.
\end{Proposition}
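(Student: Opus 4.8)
The plan is to argue entirely through the linear relation graph, exploiting Lemma \ref{L2} in the form established by its proof, and to convert between ``$\frak{m}$ is a stable associated prime'' and ``the graph is connected and spanning'' by means of the analytic spread. The one external input I would use is the following: for a full-supported polymatroidal ideal $J$ in a polynomial ring $T$ of dimension $m$ with graded maximal ideal $\frak{m}_T$, one has $\frak{m}_T\in\Ass^{\infty}(J)$ if and only if $\ell(J)=m$. This follows from the non-increasing depth property recalled in Section 1 together with $\lim_{t\to\infty}\depth(T/J^t)=m-\ell(J)$ (see \cite{HQ}, \cite{HRV}), using that $\frak{m}_T\in\Ass(T/J^t)$ precisely when $\depth(T/J^t)=0$.

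First I would unwind the hypothesis in $S=K[x_1,\ldots,\widehat{x_k},\ldots,x_n]$, in which $\frak{p}_{\{k\}}$ is the graded maximal ideal. If $I(\frak{p}_{\{k\}})$ failed to be full-supported in $S$, then some variable would be a nonzerodivisor on every $S/I(\frak{p}_{\{k\}})^t$ and $\frak{p}_{\{k\}}$ could never be associated; hence the assumption $\frak{p}_{\{k\}}\in\Ass^{\infty}(I(\frak{p}_{\{k\}}))$ already forces $I(\frak{p}_{\{k\}})$ to be full-supported. Applying the equivalence above in $S$ (with $m=n-1$) then gives $\ell(I(\frak{p}_{\{k\}}))=n-1$, so in the formula $\ell=r'-s'+1$ with $r'=|V(\Gamma_{I(\frak{p}_{\{k\}})})|\le n-1$ and $s'\ge 1$ we are forced to have $r'=n-1$ and $s'=1$. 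That is, $\Gamma_{I(\frak{p}_{\{k\}})}$ is connected with every variable of $S$ occurring as a vertex.

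Next I would transport this to $\Gamma_I$ and attach $x_k$. The proof of Lemma \ref{L2} shows that each edge of $\Gamma_{I(\frak{p}_{\{k\}})}$ is an edge of $\Gamma_I$, so $\Gamma_I$ already contains a connected subgraph spanning $\{x_i : i\ne k\}$. To see that $x_k$ is joined to it, I would invoke the exchange property: full support yields $u\in G(I)$ with $x_k\mid u$, while $\gcd(I)=1$ yields $w\in G(I)$ with $x_k\nmid w$. Since $\deg_{x_k}(u)>\deg_{x_k}(w)$, there is an index $j\ne k$ with $\deg_{x_j}(u)<\deg_{x_j}(w)$ and $x_j(u/x_k)\in I$; as this element has degree $d$ it lies in $G(I)$, and setting $v=x_j(u/x_k)$ we obtain $x_k v=x_j u$ with $u,v\in G(I)$, i.e. $\{x_k,x_j\}\in E(\Gamma_I)$. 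Hence $\Gamma_I$ is connected with vertex set $\{x_1,\ldots,x_n\}$, so $r=n$, $s=1$, and $\ell(I)=r-s+1=n=\dim R$. Applying the equivalence once more, now to $I$ in $R$, yields $\frak{m}\in\Ass^{\infty}(I)$.

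I expect the middle step---producing an edge of $\Gamma_I$ incident to $x_k$---to be the crux; it is there that the exchange property, full support, and $\gcd(I)=1$ are used (the latter two to guarantee generators $u,w$ with $x_k\mid u$ and $x_k\nmid w$, the former to guarantee that $v=x_j(u/x_k)$ is again a generator). The degree-$(d-1)$ hypothesis fixes the regime in which setting $x_k=1$ lowers the degree by exactly one; in the argument above it is not strictly needed beyond ensuring that $I(\frak{p}_{\{k\}})$ is an honest polymatroidal ideal of $S$ to which the dictionary applies, since full support already forces $x_k\in\supp(I)$. A secondary point is the clean use of that dictionary---the identity $\lim_t\depth(T/J^t)=\dim T-\ell(J)$ for polymatroidal ideals---which I would cite from \cite{HQ}, \cite{HRV} rather than reprove.
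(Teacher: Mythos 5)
Your proposal is correct and follows essentially the same route as the paper: the paper likewise uses the analytic-spread dictionary of \cite[Corollary 1.6, Lemma 4.2]{HQ} to conclude $s(I(\frak{p}_{\{k\}}))=1$, transports connectivity to $\Gamma_I$ via Lemma \ref{L2}, and deduces $\frak{m}\in\Ass^{\infty}(I)$. The only difference is that you explicitly verify, via the exchange property together with full support and $\gcd(I)=1$, that $x_k$ is a vertex of $\Gamma_I$ joined to the remaining spanning connected subgraph; the paper's one-line proof leaves this point (needed to get $\ell(I)=n$ rather than just $s(I)=1$) implicit.
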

\begin{proof}
Suppose that $\frak{p}_{\{k\}}\in\Ass^{\infty}(I({\frak{p}_{\{k\}}}))$. Then by using \cite[Corollary 1.6 and Lemma 4.2]{HQ} we have $s(I(\frak{p}_{\{k\}}))=1$ and so by Lemma \ref{L2}  $s(I)=1$. Thus $\frak{m}\in\Ass^{\infty}(I)$, as required.
\end{proof}

\begin{Lemma}\label{L3}
Let $I$ be a matroidal ideal of degree $d>1$ such that $s(I)<d$. Then there are monomials $u,v,w\in G(I)$ and variables $x_i,x_j$ in $R$ such that $x_iu=x_jv$ and $x_ix_j\vert w$.
\end{Lemma}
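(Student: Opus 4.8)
The plan is to translate the statement into matroid language and then read off the conclusion. Since $I$ is matroidal of degree $d$, identifying each square-free generator with its support identifies $G(I)$ with the set of bases of a matroid $M$ on $\{1,\dots,n\}$ of rank $d$, the polymatroidal exchange property being precisely the basis-exchange axiom. Our standing assumptions say exactly that $M$ is loopless (as $I$ is full-supported, every variable occurs in some generator) and has no coloops (as $\gcd(I)=1$, no variable divides all generators). Under this dictionary the object to be produced is transparent: a pair $u,v\in G(I)$ with $x_iu=x_jv$ corresponds to two bases differing by the single exchange replacing $j$ by $i$, which exist exactly when $i$ and $j$ lie in a common circuit of $M$; and a generator $w$ with $x_ix_j\mid w$ corresponds to a basis containing both $i$ and $j$. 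Hence it suffices to find indices $i\neq j$ lying in a common circuit and in a common basis.

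First I would note that the edges of $\Gamma_I$ are exactly the pairs $\{x_i,x_j\}$ with $i,j$ in a common circuit, and that (since there are no loops and no coloops, so every element lies in some circuit and hence is a vertex of $\Gamma_I$) the number $s(I)$ of connected components of $\Gamma_I$ coincides with the number of connected components of $M$. The ranks of these components are positive integers summing to $d$, and there are $s(I)$ of them; as $s(I)<d$ by hypothesis, the pigeonhole principle forces some component $N$ of $M$ to have rank at least $2$.

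The key step is the purely matroidal claim that a connected matroid $N$ of rank at least $2$ possesses a circuit of size at least $3$. Indeed, if every circuit of $N$ had at most two elements then, there being no loops, every circuit would be a parallel pair; ``being parallel'' would then be an equivalence relation on the ground set of $N$, and $N$ would be the direct sum of the rank-$1$ uniform matroids on its parallel classes. Such a direct sum of total rank at least $2$ is disconnected, contradicting the connectedness of $N$. I expect this to be the only genuine content of the argument, the remainder being bookkeeping through the dictionary above.

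Finally, fixing a circuit $C$ of $M$ with $\abs{C}\geq 3$ and choosing distinct $i,j\in C$, I would produce the three generators as follows. Because $i,j$ lie in the common circuit $C$, extending $C\setminus\{i\}$ to a basis $B_1$ avoiding $i$ yields a basis with $j\in B_1$ and $i\notin B_1$, for which $B_1\setminus\{j\}\cup\{i\}$ is again a basis; setting $u=\prod_{k\in B_1}x_k$ and $v=\prod_{k\in B_1\setminus\{j\}\cup\{i\}}x_k$ gives $x_iu=x_jv$. Because $\abs{C}\geq 3$, the pair $\{i,j\}$ is independent (no two-element subset of a circuit of size at least $3$ is dependent), so it extends to a basis $B_2$, and $w=\prod_{k\in B_2}x_k$ satisfies $x_ix_j\mid w$. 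This furnishes $u,v,w$ and $x_i,x_j$ with the required properties.
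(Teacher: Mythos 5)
Your proof is correct, and it takes a genuinely different route from the paper's. The paper argues by contradiction and induction on $d$: assuming no such triple $u,v,w$ exists, it shows that the variables occurring in linear relations split off as a prime factor, writing $I=(x_1,\dots,x_k)J$ with $J$ matroidal of degree $d-1$ (the ``$L=0$'' claim, proved by the exchange property); the inductive hypothesis then forces $s(J)=d-1$, so Theorem 1.1 gives $J=\frak{p}_2\cdots\frak{p}_d$ in pairwise disjoint sets of variables, hence $I=\frak{p}_1\cdots\frak{p}_d$ and $s(I)=d$, contradicting $s(I)<d$. You instead work directly in the underlying matroid $M$: full support and $\gcd(I)=1$ rule out loops and coloops, the edges of $\Gamma_I$ are exactly the pairs lying in a common circuit (via fundamental circuits), so by transitivity of the common-circuit relation the components of $\Gamma_I$ are the components of $M$; pigeonhole on ranks yields a component of rank at least $2$, your parallel-class argument yields a circuit $C$ with $\lvert C\rvert\geq 3$ inside it, and any two elements $i,j\in C$ produce $u,v$ (basis exchange along $C$) and $w$ (since $\{i,j\}\subsetneq C$ is independent, hence extends to a basis). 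What your approach buys: it is non-inductive, makes no use of Theorem 1.1 or of the decomposition $I=(x_1,\dots,x_k)J+L$, isolates the single piece of genuine content (a connected matroid of rank at least $2$ has a circuit of size at least $3$), and treats $d=2$ uniformly --- a small bonus, since the paper dismisses its base case with ``there is nothing to prove,'' whereas your rank-$2$ argument actually substantiates it (a connected rank-$2$ matroid without loops or coloops contains three elements that pairwise form bases). What the paper's approach buys: it stays entirely in the monomial/exchange-property language used throughout, reuses Theorem 1.1 (needed elsewhere anyway), and its contradiction argument recovers the product-of-primes structure precisely when the conclusion fails, tying the lemma to Theorem 1.1(iii). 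The standard matroid facts you quote without proof (transitivity of the common-circuit relation; the characterization of single exchanges via fundamental circuits) are classical and correctly invoked, so there is no gap, though in a final write-up they deserve a citation to a standard matroid-theory reference.
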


\begin{proof}
We use induction on $d$. If $d=2$, then there is nothing to prove.
Suppose that $d>2$ and that the result has been proved for $d-1$. Now we prove that the result holds for $d$.
Since $I$ is a matroidal ideal, it is clear that there are variables $x_i,x_j$ in $R$ and monomials  $u,v\in G(I)$ such that $x_iu=x_jv$. Suppose that, by contrary, if $x_iu=x_jv$, then $x_ix_j\nmid w$ for all monomials $w\in G(I)$. Since $x_ix_j\nmid w$ for every monomial $w\in G(I)$, it follows that if $x_if\in G(I)$ for some monomial $f$ of degree $d-1$, then $x_jf\in G(I)$. Suppose that $A=\{x_1,\ldots,x_k\}$ is a set of variables such that $x_iu=x_jv$ for some $u,v\in G(I)$. In this case we can write $I=(x_1,\ldots,x_k)J+L$ such that $J$ is a matroidal ideal of degree $d-1$ and $x_i\notin\supp(L)$ for all $x_i\in A$.
We {\it claim} that $L=0$.
Let $s^{'}\in G(L)$ and $s\in G(J)$. By exchange property over $s^{'}$ and $x_1s$, there exists $x_l\in\supp(s^{'})$ such that $x_ls\in G(I)$. Since $x_1x_l\nmid w$ for all monomials $w\in G(I)$, it follows that $x_l\in A$. Therefore $L=0$ and so $I=(x_1,\ldots,x_k)J$. Since $J$ is a matroidal ideal of degree $d-1$, if $s(J)<d-1$ then by induction hypothesis there are variables $x_k,x_l$ and monomials $u^{'},v^{'},w^{'}\in G(J)$ such that $x_ku^{'}=x_lv^{'}$ and $x_kx_l\vert w^{'}$. In this case $x_1x_ku^{'}=x_1x_lv^{'}$, $x_kx_l\vert x_1w^{'}$ and $x_1u^{'},x_1v^{'},x_1w^{'}\in G(I)$. This is a contradiction and so $s(J)=d-1$. By Theorem \ref{T00} we have $J=\frak{p}_2\ldots\frak{p}_d$ such that $G(\frak{p}_i)\cap G(\frak{p}_j)=\emptyset$ for all $2\leq i\neq j\leq d$. Therefore $I=\frak{p}_1\frak{p}_2\ldots\frak{p}_d$, where $\frak{p}_1=(x_1,\ldots,x_k)$ and $G(\frak{p}_i)\cap G(\frak{p}_j)=\emptyset$ for all $1\leq i\neq j\leq d$. In this case
$s(I)=d$ and this is a contradiction. Thus the induction process is completed, as required.
\end{proof}

Following \cite{HH2}, let $I=(u_1,\ldots,u_s)$ be a monomial ideal with linear quotients with respect to the ordering $u_1,\ldots,u_s$. We denote by $q_i(I)$ the number of variables which are required to generate the colon ideal $(u_1,\ldots,u_{i-1}):u_i$. Let $q(I)=\max\{q_i(I)\mid 2\leq i\leq s\}$. It is proved in \cite[Corollary 1.6]{HTa} that the length of the minimal free resolution of $R/I$ over $R$ is equal to $q(I)+1$. Hence $\depth R/I=n-q(I)-1$. Thus in particular the integer $q(I)$ is independent of the particular choice of the ordering of the monomials which gives linear quotients. Polymatroidal ideals have linear quotients with respect to the reverse lexicographical order of the generators, see \cite[Theorem 5.2]{CH}. Chiang-Hsieh in \cite[Theorem 2.5]{C} proved that if $I\subset R$ is a full-supported matroidal ideal of degree $d$, then $\depth R/I=d-1$.

\begin{Proposition}\label{P3}
Let $I$ be a matroidal ideal of degree $d$ such that $\dstab(I)>1$. Then $\depth(R/I^2)<\depth(R/I)$.
\end{Proposition}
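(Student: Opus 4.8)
The plan is to reduce the statement to a single inequality between the linear-quotient invariants $q(I)$ and $q(I^2)$, and then to read that inequality off from the linear relation graph $\Gamma_I$. Since $I$ is full-supported matroidal of degree $d$, the result of Chiang-Hsieh recalled above gives $\depth R/I=d-1$, so the formula $\depth R/J=n-q(J)-1$ (valid for any ideal with linear quotients, in particular for polymatroidal ideals with respect to the reverse lexicographic order) yields $q(I)=n-d$. As every power of a polymatroidal ideal is again polymatroidal, $I^2$ has linear quotients as well and $\depth R/I^2=n-q(I^2)-1$. Hence the proposition is equivalent to $q(I^2)\ge n-d+1$, i.e. $q(I^2)>q(I)$, and this is what I would prove.

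For the bookkeeping I would use the explicit description of $q$ for an ideal $J$ generated in a single degree: ordering $G(J)$ by reverse lexicographic order, a variable $x_\ell$ lies in the colon of $h\in G(J)$ against the earlier generators exactly when there is an index $m>\ell$ with $x_m\mid h$ and $x_\ell h/x_m\in J$. Writing $L_J(h)$ for the set of such $\ell$, one has $q(J)=\max_{h\in G(J)}\lvert L_J(h)\rvert$. Two facts drive the argument. First, lowerings are inherited by products: if $a,b\in G(I)$ and $\ell\in L_I(a)$ via $m$, then $x_\ell(ab)/x_m=(x_\ell a/x_m)\,b\in I^2$, and symmetrically for $b$; since every degree-$2d$ monomial of $I^2$ is a minimal generator, this gives $L_{I^2}(ab)\supseteq L_I(a)\cup L_I(b)$, whence $q(I^2)\ge\lvert L_I(a)\cup L_I(b)\rvert$. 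Second, a lowering is the same thing as an edge of $\Gamma_I$: the relation $x_\ell a/x_m=a'\in G(I)$ is precisely $x_\ell a=x_m a'$, so $\bigcup_{b\in G(I)}L_I(b)$ is exactly the set of vertices of $\Gamma_I$ having a neighbour of strictly larger index.

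Now I would exploit the freedom to relabel the variables, which changes none of $\depth$, $q$, or the matroidal hypotheses. By Theorem \ref{T00}(i) the assumption $\dstab(I)>1$ means $s<d$, so $\Gamma_I$ has $s\le d-1$ connected components. Choosing inside each component a connected ordering of its vertices (build the component outward from a single top-labelled vertex, assigning successively smaller labels to newly attached vertices), I arrange that every vertex except the top one of its component has a larger-indexed neighbour, so that $\lvert\bigcup_b L_I(b)\rvert=n-s\ge n-d+1$. Fix a maximizer $a\in G(I)$, so $\lvert L_I(a)\rvert=q(I)=n-d$; since the total union is strictly larger than $L_I(a)$, some generator $b$ satisfies $L_I(b)\not\subseteq L_I(a)$. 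Combining with the inheritance inclusion, $q(I^2)\ge\lvert L_I(a)\cup L_I(b)\rvert\ge (n-d)+1$, and therefore $\depth R/I^2\le d-2<d-1=\depth R/I$.

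The step I expect to be the main obstacle is the passage from \emph{$q(I)=n-d$ is attained by a single generator} to \emph{two generators already witness a strictly larger joint lowering set}. The inheritance inclusion $L_{I^2}(ab)\supseteq L_I(a)\cup L_I(b)$ and the edge reformulation of $\bigcup_b L_I(b)$ are routine, but verifying that a favourable relabelling realises the full count $n-s$ — equivalently, that one may choose $b$ contributing an index outside a prescribed maximizer set $L_I(a)$ — is where care is needed. Here Lemma \ref{L3}, which produces explicit $u,v,w$ with $x_iu=x_jv$ and $x_ix_j\mid w$, offers an alternative and more hands-on source of the extra edge, and is presumably why it was isolated beforehand.
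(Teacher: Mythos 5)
Your reduction to $q(I^2)>q(I)$ and the machinery you build for it --- the description of the revlex colon sets $L_J(h)$, the inheritance $L_{I^2}(ab)\supseteq L_I(a)\cup L_I(b)$ (valid because $ab$ has degree $2d$ and so lies in $G(I^2)$), and the identification of $\bigcup_b L_I(b)$ with the set of vertices of $\Gamma_I$ having a larger-indexed neighbour --- are all correct, and this is a genuinely different route from the paper's. But the argument hangs twice on an unjustified assertion: that $V(\Gamma_I)=\{x_1,\ldots,x_n\}$. Theorem \ref{T00}(i), as stated, says $\dstab(I)=1$ if and only if ($V(\Gamma_I)$ is all of $\{x_1,\ldots,x_n\}$ \emph{and} $s=d$); so from $\dstab(I)>1$ alone you may only conclude that $V(\Gamma_I)$ is not full \emph{or} $s<d$, not $s<d$ outright. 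More seriously, your count $\lvert\bigcup_b L_I(b)\rvert=n-s$ after relabelling silently assumes every variable is a vertex of $\Gamma_I$: if some $x_i$ lay on no linear relation, the union would only have size $\lvert V(\Gamma_I)\rvert-s$, which need not exceed $q(I)=n-d$, and then no generator $b$ with $L_I(b)\not\subseteq L_I(a)$ need exist. The gap is real but easy to close: under the standing hypotheses ($I$ full-supported, matroidal, $\gcd(I)=1$), for each $i$ there are $u,v\in G(I)$ with $x_i\mid u$ and $x_i\nmid v$, and the exchange property yields $x_j$ with $u'=x_ju/x_i\in G(I)$, whence $x_iu'=x_ju$ and $x_i\in V(\Gamma_I)$. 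With this two-line observation inserted, your proof is complete. (For the implication $\dstab(I)>1\Rightarrow s(I)<d$ you could instead quote Herzog--Vladoiu, as the paper itself does, but the counting step still needs fullness of the vertex set.)

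For comparison, the paper's proof runs through Lemma \ref{L3}: it produces $u,v,w\in G(I)$ with $x_iu=x_jv$ and $x_ix_j\mid w$, takes the single generator $m=uv=x_ix_js^2$ of $I^2$, and shows directly that its revlex colon contains at least $n-d+1$ variables --- the $n-d-1$ variables outside $\supp(m)$, using full support of $I^2$ and the exchange property, plus $x_i$ and $x_j$ themselves, obtained by exchanges of $m$ against $vw$ and $uw$ (this is exactly where the extra monomial $w$ is indispensable). Your argument replaces that explicit construction by a counting argument: Chiang-Hsieh's formula gives $n-d$ colon variables for free at a maximizer $a$, connectivity of the relabelled graph supplies one extra index from some other generator $b$, and the product $ab$ collects both. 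That mechanism bypasses Lemma \ref{L3} entirely and is arguably cleaner, at the cost of leaning harder on the equality $\depth R/I=d-1$ and on the vertex-fullness fact above.
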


\begin{proof}
Since $\dstab(I)>1$, by \cite[Proposition 2.3]{HV} it follows that $s(I)<d$. Thus, by Lemma \ref{L3}, there are monomials $u,v,w\in G(I)$ and variables $x_i,x_j$ in $R$ such that $x_iu=x_jv$ and $x_ix_j\vert w$. After relabeling the variables we may assume that $x_{n-d}u=x_{n-d+1}v$ and $x_{n-d}x_{n-d+1}\vert w$. Thus there is a monomial $s$ of degree $d-1$ such that $u=x_{n-d+1}s$ and $v=x_{n-d}s$. Also, there is a monomial $w_1$ of degree $d-2$ such that $w=x_{n-d}x_{n-d+1}w_1$. Hence $m=x_{n-d}x_{n-d+1}s^2\in G(I^2)$. Suppose that $\supp(s)=\{x_{n-d+2},\ldots,x_n\}$. Let $J$ denote the monomial ideal generated by those monomials $r\in G(I^2)$ such that $r$ is bigger than $m$ with respect to the reverse lexicographic order induced by the ordering $x_1>x_2>\ldots>x_n$. For each $1\leq l\leq {n-d-1}$, there is a monomial belonging to $G(I^2)$ which is divided by $x_l$. Thus there is a variable $x_k$ with $n-d\leq k\leq n$ such that $x_l(m/x_k)\in G(I^2)$. Since $x_k<x_l$, it follows that $x_l(m/x_k)\in J$ and so $x_lm\in J$. Therefore $x_l\in J:m$ for all $1\leq l\leq n-d-1$. Consequently, one has $q(I^2)\geq n-d-1$. Now, by exchange properties over elements $m=x_{n-d}x_{n-d+1}s^2$ and $x_{n-d}^2x_{n-d+1}sw_1$ of $G(I^2)$ there is $n-d+2\leq k\leq n$ such that $x_{n-d}(x_{n-d}x_{n-d+1}s^2/x_k)\in G(I^2)$. Since $x_{n-d}>x_k$, we have  $x_{n-d}(x_{n-d}x_{n-d+1}s^2/x_k)\in J$. Thus $x_{n-d}m\in J$ and so $x_{n-d}\in J:m$. By the above argument over elements $m=x_{n-d}x_{n-d+1}s^2$ and $x_{n-d}x_{n-d+1}^2sw_1$ of $G(I^2)$, we have $x_{n-d+1}\in J:m$. Therefore $q(I^2)\geq n-d+1$ and so $\depth(R/I^2)<\depth(R/I)$, as required.
\end{proof}

\begin{Proposition}\label{P4}
Let $I$ be a matroidal ideal of degree $3$ and $\frak{m}\notin\Ass^{\infty}(I)$. Then $\astab(I)=\dstab(I)\leq 2$.
\end{Proposition}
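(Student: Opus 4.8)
The plan is to dispose first of the trivial alternative and then treat the interesting one. If $\dstab(I)=1$, then Theorem \ref{T1} gives $\astab(I)=1$ at once, so $\astab(I)=\dstab(I)=1\le 2$ and nothing remains. I may therefore assume $\dstab(I)>1$ and aim to prove $\astab(I)=\dstab(I)=2$.

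For the depth side I would argue as follows. Since $I$ is a full-supported matroidal ideal of degree $3$, the theorem of Chiang-Hsieh recalled above gives $\depth(R/I)=2$, and Proposition \ref{P3} (applicable because $\dstab(I)>1$) forces $\depth(R/I^2)<2$, hence $\depth(R/I^2)\le 1$. On the other hand, $\frak{m}\notin\Ass^{\infty}(I)$ together with the persistence $\Ass(I^t)\subseteq\Ass(I^{t+1})$ shows $\frak{m}\notin\Ass(I^t)$ for every $t$, so $\depth(R/I^t)>0$ for all $t$ and the (non-increasing) depth function has stable value $\ge 1$. Thus $\depth(R/I^2)\ge$ stable value $\ge 1$, while $\depth(R/I^2)\le 1$, so $\depth(R/I^2)=1$ already equals the stable value and $\dstab(I)=2$.

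For the associated-primes side I would reduce to localizations. Because $\frak{m}\notin\Ass(I^t)$ for all $t$, every prime of $\Ass(I^t)$ omits some variable $x_k$; since monomial localization commutes with taking powers, $\Ass\big(I(\frak{p}_{\{k\}})^t\big)$ is precisely the set of primes of $\Ass(I^t)$ missing $x_k$, re-embedded in the smaller ring. Taking the union over $k$ recovers $\Ass(I^t)$, and since all these families increase with $t$ one obtains $\astab(I)=\max_k\astab\big(I(\frak{p}_{\{k\}})\big)$; this is the content of \cite[Propositions 2.8, 2.9]{KM}. Full-supportedness guarantees that each $I(\frak{p}_{\{k\}})=I:x_k$ has a degree-$2$ minimal generator and hence, being polymatroidal, is matroidal of degree $2$. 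It then suffices to bound $\astab$ of an arbitrary degree-$2$ matroidal ideal $J$ by $2$: after the harmless reductions to its support and modulo $\gcd(J)$ (which change neither index), $J$ is either a monomial prime, where both indices equal $1$, or a full-supported degree-$2$ matroidal ideal, where the same depth argument as above (now $\depth(R/J)=1$ dropping to the terminal value $0$) gives $\dstab(J)\le 2$ and \cite[Theorem 2.12]{KM} gives $\astab(J)=\dstab(J)\le 2$. Therefore $\astab(I)\le 2$, and since $\dstab(I)>1$ forces $\astab(I)>1$ through Theorem \ref{T1}, I conclude $\astab(I)=\dstab(I)=2$.

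The depth half is routine once Chiang-Hsieh's formula and Proposition \ref{P3} are available; the delicate point is the associated-primes half. The main obstacle is justifying the reduction $\astab(I)=\max_k\astab(I(\frak{p}_{\{k\}}))$ and, above all, controlling the degree-$2$ localizations: these need not be full-supported, so one must first pass to the support (and divide out the gcd) before the degree-$2$ structure theory and Proposition \ref{P3} can be invoked. Keeping careful track of these reductions, rather than any single hard estimate, is where the work lies.
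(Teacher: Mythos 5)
Your proof is correct, and it consumes the same raw ingredients as the paper's (reduction to degree-$2$ localizations, the degree-$2$ results of \cite{KM}, Theorem \ref{T1}, Proposition \ref{P3}, Chiang-Hsieh's depth formula, persistence and non-increasing depth), but it runs the logic in the opposite direction and organizes the cases differently. The paper determines $\astab(I)$ first: by \cite[Proposition 2.9, Theorem 2.12]{KM}, $\astab(I)=\astab(I(\frak{p}_{\{i\}}))=\dstab(I(\frak{p}_{\{i\}}))$ for a \emph{single} suitable $i$, and it then splits according to whether $I(\frak{p}_{\{i\}})$ is full-supported and, if not, whether the prime $\frak{p}_{\{1,\ldots,k\}}$ lies in $\Ass^{\infty}(I)$, reading the value $1$ or $2$ off the proof of \cite[Corollary 2.13]{KM}; only afterwards is that value transferred to $\dstab(I)$, via Theorem \ref{T1} when it is $1$, and via Proposition \ref{P3} together with $\frak{m}\notin\Ass^{\infty}(I)$ when it is $2$. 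You instead split on $\dstab(I)$: you compute $\dstab(I)=2$ directly, bound $\astab(I)\leq 2$ through the union $\Ass(I^t)=\bigcup_k\Ass\bigl((I(\frak{p}_{\{k\}}))^t\bigr)$ over \emph{all} $k$ combined with a uniform bound $\astab\leq 2$ for degree-$2$ matroidal ideals, and finish with $\astab(I)>1$ from Theorem \ref{T1}. Your organization buys a cleaner statement --- it bypasses the paper's sub-case on whether $\frak{p}_{\{1,\ldots,k\}}\in\Ass^{\infty}(I)$ --- but the burden it displaces lands on your parenthetical claim that passing to $K[\supp(J)]$ and dividing by $\gcd(J)$ ``change neither index,'' which you assert rather than prove. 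The claim is true and elementary: adjoining variables is a flat polynomial extension, and if $J=uJ'$ with $u=\gcd(J)$, the exact sequence $0\to R/J'^t\to R/u^tJ'^t\to R/(u^t)\to 0$ yields $\depth (R/(uJ')^t)=\depth (R/J'^t)$ and $\Ass((uJ')^t)=\Ass(J'^t)\cup\{(x_j):x_j\mid u\}$ for all $t$. But this is precisely the issue the paper's full-supported versus not-full-supported case distinction exists to handle, so in a final write-up it should be proved (or a reference given) rather than waved off; with that line added, your argument is a complete and slightly more streamlined alternative to the paper's proof.
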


\begin{proof}
Since $\frak{m}\notin\Ass^{\infty}(I)$, by \cite[Proposition 2.9 and Theorem 2.12]{KM} we have $\astab(I)=\astab(I(\frak{p}_{\{i\}}))=\dstab(I(\frak{p}_{\{i\}}))$ for some $1\leq i\leq n$. We may assume $i=1$ and so $\astab(I)=\astab(I(\frak{p}_{\{1\}}))=\dstab(I(\frak{p}_{\{1\}}))$. We can consider two cases:\\
{\bf Case 1:} Let $I(\frak{p}_{\{1\}})$ be full-supported.
Since $deg(I(\frak{p}_{\{1\}}))=2$, by the proof of \cite[Corollary 2.13]{KM} we have $\astab(I(\frak{p}_{\{1\}}))=\dstab(I(\frak{p}_{\{1\}}))=1$ and so $\astab(I)=1$. Hence by Theorem \ref{T1} we have $\dstab(I)=1=\astab(I)$.\\
{\bf Case 2:} Suppose $I(\frak{p}_{\{1\}})$ is not full-supported. In this case we may assume $I(\frak{p}_{\{1\}})=I(\frak{p}_{\{1,\ldots,k\}})$ and $I(\frak{p}_{\{1,\ldots,k\}})$ is full-supported in $k[x_{k+1},\ldots,x_n]$. If $\frak{p}_{\{1,\ldots,k\}}\notin\Ass^{\infty}(I)$, then $\frak{p}_{\{1,\ldots,k\}}\notin\Ass^{\infty}(I(\frak{p}_{\{1,\ldots,k\}}))$.
Hence $\astab(I(\frak{p}_{\{1,\ldots,k\}}))=\dstab(I(\frak{p}_{\{1,\ldots,k\}}))=1$. Thus $\astab(I)=1$ and so $\dstab(I)=1$.
If $\frak{p}_{\{1,\ldots,k\}}\in\Ass^{\infty}(I)$, then $\frak{p}_{\{1,\ldots,k\}}\in\Ass^{\infty}(I(\frak{p}_{\{1,\ldots,k\}}))$. Thus by the proof of  \cite[Corollary 2.13]{KM} we have $\astab(I)=\astab(I(\frak{p}_{\{1,\ldots,k\}}))=\dstab(I(\frak{p}_{\{1,\ldots,k\}}))=2$ and also by Theorem \ref{T1} we have $\dstab(I)>1$. By Proposition \ref{P3}, we have  $\depth(R/I^2)\leq 1$. Since $\frak{m}\notin\Ass^{\infty}(I)$, it follows $\depth(R/I^2)=\depth(R/I^i)=1$ for all $i\geq 2$. Therefore $\dstab(I)=2$. This completes the proof.
\end{proof}

\begin{Lemma}\label{L4}
Let $I$ be a polymatroidal ideal of degree $2$ and $\frak{m}\in\Ass^{\infty}(I)$. Then $\astab(I)=\dstab(I)\leq 2$
\end{Lemma}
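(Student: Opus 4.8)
The plan is to prove the two assertions separately: the equality $\astab(I)=\dstab(I)$ and the bound that the common value is at most $2$. For the equality I would appeal to the degree-two theory already in the literature: since $I$ is polymatroidal of degree $2$, the results of \cite{KM} (the same input used in Theorem \ref{T1} and in the proof of Proposition \ref{P4}) give $\astab(I)=\dstab(I)$. It then remains only to bound the common value, and I would do this on the depth side. Since $I$ is polymatroidal, the function $t\mapsto\depth R/I^{t}$ is non-increasing, and since $\frak{m}\in\Ass^{\infty}(I)$ it is eventually equal to $0$; consequently $\dstab(I)=\min\{t:\depth R/I^{t}=0\}=\min\{t:\frak{m}\in\Ass(I^{t})\}$. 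Thus the whole problem reduces to the single claim $\frak{m}\in\Ass(I^{2})$.

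To establish $\frak{m}\in\Ass(I^{2})$ I would first dispose of the trivial case: if $\frak{m}\in\Ass(I)$ then $\depth R/I=0$ and $\dstab(I)=1\le 2$, so assume $\frak{m}\notin\Ass(I)$. Because $I$ is full-supported with $\gcd(I)=1$ and $\frak{m}\in\Ass^{\infty}(I)$, the analytic spread is maximal, $\ell(I)=n$, and by \cite[Corollary 1.6 and Lemma 4.2]{HQ} the linear relation graph $\Gamma_{I}$ is connected, that is $s(I)=1$ (this is exactly the mechanism exploited in Proposition \ref{P2}). I would then produce a socle element of $R/I^{2}$, i.e. a monomial $w\notin I^{2}$ with $x_{i}w\in I^{2}$ for every $i$; such a $w$ forces $I^{2}:w=\frak{m}$ and hence $\frak{m}\in\Ass(I^{2})$. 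When $I$ is squarefree this is precisely the content of Proposition \ref{P3}: from $s(I)=1<2=d$ and Lemma \ref{L3} one extracts generators $x_{a}x_{b},x_{a}x_{c},x_{b}x_{c}\in G(I)$, and the argument there gives $q(I^{2})\ge n-1$, whence $\depth R/I^{2}=n-q(I^{2})-1=0$.

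The construction of $w$ in the presence of square generators is the technical heart and, I expect, the main obstacle, because Lemma \ref{L3} and hence the triangle argument of Proposition \ref{P3} are only available for matroidal ideals. The difficulty is genuine: for the polymatroidal ideal $I=(x_{1}^{2},x_{1}x_{2},x_{2}x_{3})$ one has $\frak{m}\notin\Ass(I)$ and no triangle of generators exists, yet $\frak{m}\in\Ass(I^{2})$, witnessed by $w=x_{1}^{2}x_{2}$: indeed $x_{1}w=(x_{1}^{2})(x_{1}x_{2})$, $x_{2}w=(x_{1}x_{2})^{2}$ and $x_{3}w=(x_{1}^{2})(x_{2}x_{3})$ all lie in $I^{2}$, while $w\notin I^{2}$. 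I would therefore handle squares by a short case analysis on the rank-two discrete polymatroid underlying $I$: either $\frak{m}$ is already associated to $I$ (as for $\frak{m}^{2}$ or for $I_{(2;2,1,1)}$), or one selects a square $x_{a}^{2}\in G(I)$ together with an edge $x_{a}x_{b}\in G(I)$ such that $x_{b}x_{j}\in I$ for every variable $x_{j}$, and takes $w=x_{a}^{2}x_{b}$, the membership $x_{j}w=(x_{a}^{2})(x_{b}x_{j})\in I^{2}$ then being immediate; connectivity of $\Gamma_{I}$ is what guarantees the required generators. Alternatively this step is exactly \cite[Corollary 2.13]{KM} and may simply be quoted.

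Finally I would assemble the pieces: combining $\astab(I)=\dstab(I)$ with $\dstab(I)\le 2$ yields $\astab(I)=\dstab(I)\le 2$. Should one prefer to avoid citing \cite{KM} for the equality, the same socle argument can be run at every monomial prime $\frak{q}$: one has $\frak{q}\in\Ass(I^{t})$ if and only if the maximal ideal of the corresponding polynomial subring is associated to $I(\frak{q})^{t}$, and since $I(\frak{q})$ is again polymatroidal of degree at most $2$, the claim above applied to $I(\frak{q})$ shows this already holds at $t=2$; hence $\Ass(I^{t})=\Ass(I^{2})$ for all $t\ge 2$, giving $\astab(I)\le 2$ and, together with the depth computation, $\astab(I)=\dstab(I)$.
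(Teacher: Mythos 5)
Your reduction and your squarefree branch are sound: quoting \cite[Theorem 2.12]{KM} for $\astab(I)=\dstab(I)$ is exactly what the paper does; the identity $\dstab(I)=\min\{t:\frak{m}\in\Ass(I^t)\}$ does follow from the non-increasing depth function together with $\frak{m}\in\Ass^{\infty}(I)$; and the triangle argument via Lemma \ref{L3} and Proposition \ref{P3} correctly settles the matroidal case (the paper just cites \cite[Corollary 2.13]{KM} there). The genuine gap is in the branch where $G(I)$ contains a square $x_a^2$, and it is threefold. First, your motivating example $I=(x_1^2,x_1x_2,x_2x_3)$ is \emph{not} polymatroidal: taking $u=x_2x_3$, $v=x_1^2$ and $i=2$, the exchange property would require $x_1(u/x_2)=x_1x_3\in I$, which fails; so it cannot certify that the difficulty is genuine. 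Second, the existence of an edge $x_ax_b\in G(I)$ with $x_bx_j\in I$ for every variable $x_j$ is asserted but never proved (``connectivity of $\Gamma_I$'' does not give it), and for $b\neq a$ it is false in general: the transversal polymatroidal ideal $I=(x_1,x_2,x_3)(x_1,x_4,x_5)$ is full-supported with $\gcd(I)=1$ and $x_1^2\in G(I)$, yet no $b\in\{2,3,4,5\}$ works, since $x_2x_3\notin I$ and $x_4x_5\notin I$; and with $b=a$ the requirement becomes $x_ax_j\in I$ for all $j$, which is precisely the statement that needs proof. Third, the fallback of quoting \cite[Corollary 2.13]{KM} is unavailable here: that corollary concerns matroidal (squarefree) ideals --- it is exactly what the paper invokes in its squarefree case --- so it says nothing about ideals with a square generator; if it did cover them, Lemma \ref{L4} would be an immediate corollary and no case distinction would be needed.

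What you are missing is that the troublesome sub-case is vacuous, and this is the whole point of the paper's Case 1. Suppose $x_a^2\in G(I)$. Since $I$ is full-supported, for each $j\neq a$ there is $u\in G(I)$ with $x_j\mid u$; if $u\neq x_ax_j$, write $u=x_jx_l$ with $l\neq a$ (possibly $l=j$) and apply the exchange property to the pair $(u,x_a^2)$ with the index $l$: it provides $k$ with $\deg_{x_k}(u)<\deg_{x_k}(x_a^2)$ and $x_k(u/x_l)\in I$, and since $x_a^2$ is supported only at $x_a$, necessarily $k=a$, whence $x_ax_j\in I$, i.e.\ $x_ax_j\in G(I)$. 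Consequently $I:x_a=\frak{m}$ (equivalently, as the paper argues via linear quotients, $q(I)\geq n-1$ and $\depth R/I=0$). So a square generator forces $\frak{m}\in\Ass(I)$ and $\dstab(I)=1$; the configuration ``$x_a^2\in G(I)$ and $\frak{m}\notin\Ass(I)$'' never occurs for a full-supported polymatroidal ideal of degree $2$. (Incidentally, the same fact $x_ax_j\in G(I)$ for all $j$ is what would repair your socle construction, since $x_j\cdot x_a^2x_b=(x_ax_j)(x_ax_b)\in I^2$; but once $\frak{m}\in\Ass(I)$ is known, no analysis of $I^2$ is needed in this case.)
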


\begin{proof}
By \cite[Theorem 2.12]{KM}, we have $\astab(I)=\dstab(I)$. We have two cases:\\
{\it Case 1:} Let $G(I)$ have at least one pure power of a variable, say $x_i^2\in G(I)$. Then $\supp(x_i^2)=\{x_i\}$. Now by using the same argument as used in the proof of Proposition \ref{P3} we conclude that $q(I)\geq n-1$ and it therefore follows $\depth(R/I)=0$. Thus $\dstab(I)=1$ and so $\astab(I)=\dstab(I)\leq 2$.\\
{\it Case 2:} Let  $G(I)$ do not have any pure power of the variables. Then $I$ is square-free and so $I$ is a matroidal ideal. Thus,  by \cite[Corollary 2.13]{KM}, $\astab(I)=\dstab(I)\leq 2$. This completes the proof.

\end{proof}

\begin{Proposition}\label{P1}
Let $I$ be a polymatroidal ideal of degree $3$ and $\frak{m}\in\Ass^{\infty}(I)\setminus\Ass(I)$. Then $\astab(I)=\dstab(I)$.
\end{Proposition}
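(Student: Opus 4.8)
The plan is to reduce everything to two clean facts: that $\frak{m}\in\Ass^{\infty}(I)$ forces the stable depth to be $0$, and that the one--step monomial localizations $I(\frak{p}_{\{i\}})$ drop to degree at most $2$. First I would record that, since the depth function is non-increasing along powers and $\frak{m}\in\Ass^{\infty}(I)$, the stable value of $\depth(R/I^{t})$ is $0$; consequently $\dstab(I)$ is exactly the first power $t$ with $\frak{m}\in\Ass(I^{t})$, because $\frak{m}\in\Ass(I^{t})$ if and only if $\depth(R/I^{t})=0$. Writing $\astab_{\frak{m}}(I)$ for this first power, we get $\dstab(I)=\astab_{\frak{m}}(I)$. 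Moreover, since $\frak{m}\notin\Ass(I)$ we have $\depth(R/I)>0$, while the stable depth is $0$; hence $\dstab(I)\geq 2$.

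Next I would invoke the localization description of $\astab$ from \cite[Propositions 2.8, 2.9]{KM}. Every monomial prime associated to some power of $I$ is either $\frak{m}$ or a proper prime $\frak{p}_A$ with $A\neq\emptyset$; for the latter, choosing $i\in A$, the prime $\frak{p}_A$ enters $\Ass(I^{t})$ at precisely the power at which its image enters $\Ass(I(\frak{p}_{\{i\}})^{t})$. Taking the maximum over all associated primes, and accounting separately for $\frak{m}$, yields
\[
\astab(I)=\max\Bigl(\astab_{\frak{m}}(I),\ \max_{1\le i\le n}\astab(I(\frak{p}_{\{i\}}))\Bigr)=\max\Bigl(\dstab(I),\ \max_{1\le i\le n}\astab(I(\frak{p}_{\{i\}}))\Bigr).
\]
In particular this already gives $\astab(I)\ge\dstab(I)$, so only the reverse inequality requires work.

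For the reverse inequality I would bound each localization term. Since $I$ is full-supported of degree $3$, for every $i$ the variable $x_i$ lies in $\supp(I)$, so $I(\frak{p}_{\{i\}})=I:x_i^{\infty}$ is a polymatroidal ideal of degree at most $2$. By the degree-two theory---Lemma \ref{L4} when the relevant maximal ideal lies in $\Ass^{\infty}(I(\frak{p}_{\{i\}}))$, and \cite[Theorem 2.12, Corollary 2.13]{KM} otherwise---each such localization satisfies $\astab(I(\frak{p}_{\{i\}}))=\dstab(I(\frak{p}_{\{i\}}))\le 2$. Hence $\max_{i}\astab(I(\frak{p}_{\{i\}}))\le 2\le\dstab(I)$, and substituting into the displayed formula collapses the maximum to $\dstab(I)$. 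Therefore $\astab(I)=\dstab(I)$.

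The step I expect to be the main obstacle is the localization formula for $\astab$: I must insert the contribution of $\frak{m}$ itself correctly (here it equals $\dstab(I)$, which is the crux), and I must ensure that passing to $I(\frak{p}_{\{i\}})$ genuinely lowers the degree to at most $2$ rather than leaving a degree-$3$ localization that would force an induction on the number of variables. The full-support hypothesis of this section guarantees the degree drop, and the degree-two equality $\astab=\dstab\le 2$ then supplies exactly the bound needed to dominate every localization term by $\dstab(I)\ge 2$, so the two indices coincide.
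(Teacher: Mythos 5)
Your proof is correct and follows essentially the same route as the paper's: both arguments reduce to the degree-at-most-$2$ localizations (handled by Lemma \ref{L4} together with the cited results of \cite{KM}), and both identify $\dstab(I)$ with the first power at which $\frak{m}$ becomes associated, which is at least $2$ since $\frak{m}\notin\Ass(I)$ and therefore dominates the contribution of all other primes. The only cosmetic difference is that the paper localizes at each $\frak{p}\in\Ass^{\infty}(I)\setminus\{\frak{m}\}$ directly to conclude $\Ass^{\infty}(I)=\Ass(I^2)\cup\{\frak{m}\}$, whereas you route the same information through the coordinate localizations $I(\frak{p}_{\{i\}})$ and an explicit $\max$ formula for $\astab(I)$.
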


\begin{proof}
Suppose that $\frak{p}\in\Ass^{\infty}(I)$, where $\frak{p}\neq\frak{m}$. Then there exists $t$ such that $\frak{p}R_{\frak{p}}\in\Ass(I^t(\frak{p}))$. Since $\deg(I(\frak{p}))\leq 2$, by Lemma \ref{L4} we have
$\frak{p}R_{\frak{p}}\in\Ass(I^2(\frak{p}))$ and so $\frak{p}\in\Ass(I^2)$. Hence $\Ass^{\infty}(I)=\Ass(I^2)\cup\{\frak{m}\}$.
It therefore follows that $\astab(I)=\dstab(I)$.
\end{proof}

Note that in Proposition \ref{P1}, if $\frak{m}\in\Ass(I)$ then the result does not hold. The first author and Karimi \cite[Example 2.21]{KM} have given the following example:
\begin{Example} Consider the polymatroidal ideal
 \[I=(x_1x_2x_3,x_2^2x_3,x_2x_3^2,x_1x_2x_4,x_2^2x_4,x_2x_4^2,x_1x_3x_4,x_3^2x_4,x_3x_4^2,x_2x_3x_4)\]
 of degree $3$.
Then $\frak{m}\in\Ass(I)$, $\dstab(I)=1$ and $\astab(I)=2$.
\end{Example}
The following corollary immediately follows from Proposition \ref{P1}.
\begin{Corollary}\label{C1}
Let $I$ be a matroidal ideal of degree $3$ such that $\frak{m}\in\Ass^{\infty}(I)$. Then  $\astab(I)=\dstab(I)$.
\end{Corollary}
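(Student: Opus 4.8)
The plan is to reduce the corollary directly to Proposition \ref{P1}. Observe that the hypothesis of Proposition \ref{P1} demands $\frak{m}\in\Ass^{\infty}(I)\setminus\Ass(I)$, while here I am only given $\frak{m}\in\Ass^{\infty}(I)$. Since every matroidal ideal is in particular polymatroidal, the single thing that needs to be verified is that the stronger membership $\frak{m}\in\Ass(I)$ cannot occur, i.e. that $\frak{m}$ is not already an associated prime of $R/I$ itself. Once that is established, Proposition \ref{P1} applies verbatim.

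First I would invoke the standing assumptions of this section, namely that $I$ is full-supported with $\gcd(I)=1$, together with Chiang-Hsieh's theorem (quoted just before Proposition \ref{P3}), which asserts that a full-supported matroidal ideal of degree $d$ satisfies $\depth R/I=d-1$. For $d=3$ this gives $\depth R/I=2$.

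Next I would use the standard fact that $\frak{m}\in\Ass(R/I)$ is equivalent to $\depth R/I=0$. Since $\depth R/I=2>0$, this forces $\frak{m}\notin\Ass(I)$. Combined with the hypothesis $\frak{m}\in\Ass^{\infty}(I)$, I conclude that $\frak{m}\in\Ass^{\infty}(I)\setminus\Ass(I)$.

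Finally, applying Proposition \ref{P1} to the polymatroidal ideal $I$ of degree $3$ yields $\astab(I)=\dstab(I)$, which completes the proof. The only (very mild) obstacle is the bookkeeping around the full-support and $\gcd(I)=1$ hypotheses needed to apply Chiang-Hsieh's depth formula; beyond confirming those, no genuine difficulty arises, which is why the result follows immediately from Proposition \ref{P1}.
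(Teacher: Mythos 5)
Your proof is correct and takes essentially the same route as the paper, whose entire argument is that the corollary ``immediately follows from Proposition \ref{P1}''; the one point left implicit there is exactly what you verify, namely that $\frak{m}\notin\Ass(I)$, so that $\frak{m}\in\Ass^{\infty}(I)\setminus\Ass(I)$ and Proposition \ref{P1} applies. Your justification of this via Chiang-Hsieh's depth formula (legitimate under the section's standing full-support and $\gcd(I)=1$ assumptions) is valid, though one could get it even more cheaply: a matroidal ideal is square-free, hence radical, so $\Ass(I)=\Min(I)$, and $\frak{m}$ cannot be a minimal prime of an ideal generated in degree $3$.
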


The following result easily follows from Corollary \ref{C1} and Proposition \ref{P4}.

\begin{Corollary}\label{C2}
Let $I$ be a matroidal ideal of degree $3$. Then $\astab(I)=\dstab(I)$.
\end{Corollary}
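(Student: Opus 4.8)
The plan is to reduce the claim to a single dichotomy on whether the graded maximal ideal $\frak{m}$ lies in the stable set $\Ass^{\infty}(I)$, since the two immediately preceding results already dispose of each of the two possibilities. Both of those results apply verbatim to any matroidal ideal of degree $3$ (under the standing assumption of this section that $I$ is full-supported with $\gcd(I)=1$), so no auxiliary hypotheses have to be checked before invoking them.

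Concretely, I would first observe that exactly one of $\frak{m}\in\Ass^{\infty}(I)$ or $\frak{m}\notin\Ass^{\infty}(I)$ can hold. In the first case, Corollary \ref{C1} gives $\astab(I)=\dstab(I)$ directly. In the second case, Proposition \ref{P4} yields $\astab(I)=\dstab(I)\leq 2$, and in particular the desired equality $\astab(I)=\dstab(I)$. Since these two cases are exhaustive, the equality $\astab(I)=\dstab(I)$ holds for every matroidal ideal of degree $3$, which is the assertion.

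There is no genuine obstacle remaining at this stage: the substantive work has already been carried out in establishing Proposition \ref{P4} (which rests on the combinatorial input of Lemma \ref{L3} and Proposition \ref{P3}, together with the localization and stabilization results of \cite{KM}) and in Corollary \ref{C1} (which descends from Proposition \ref{P1}, and ultimately from the degree-two analysis in Lemma \ref{L4}). The corollary is therefore simply the assembly of these two complementary statements, and the only point worth checking is that their hypotheses partition all cases, which they do through the trivial dichotomy on membership of $\frak{m}$ in $\Ass^{\infty}(I)$.
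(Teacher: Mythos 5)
Your proposal is correct and is exactly the paper's own argument: the paper derives Corollary \ref{C2} by combining Corollary \ref{C1} (the case $\frak{m}\in\Ass^{\infty}(I)$) with Proposition \ref{P4} (the case $\frak{m}\notin\Ass^{\infty}(I)$), precisely the dichotomy you describe.
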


\begin{Proposition}\label{P5}
Let $I$ be a polymatroidal ideal of degree $3$ such that $\frak{m}\notin\Ass^{\infty}(I)$. Then $\astab(I)=\dstab(I)$.
\end{Proposition}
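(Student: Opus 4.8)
The plan is to imitate the proof of Proposition~\ref{P4}, using the hypothesis $\frak{m}\notin\Ass^{\infty}(I)$ to push \emph{both} invariants down to the single-variable localizations, whose degree is at most $2$. Since $\frak{m}\notin\Ass^{\infty}(I)$, every prime in $\Ass^{\infty}(I)$ is proper and is already detected after inverting a single variable; concretely I would invoke the localization reduction of \cite[Propositions 2.8, 2.9]{KM}, which under this hypothesis gives
\[
\astab(I)=\max_{1\leq i\leq n}\astab(I(\frak{p}_{\{i\}})),\qquad \dstab(I)=\max_{1\leq i\leq n}\dstab(I(\frak{p}_{\{i\}})).
\]
By \cite[Corollary 3.2]{HRV} each $I(\frak{p}_{\{i\}})$ is again polymatroidal, and since $I$ is full-supported of degree $3$, inverting $x_i$ lowers the degree, so $\deg I(\frak{p}_{\{i\}})\leq 2$.

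The second step is to settle the equality on each localization. A polymatroidal ideal of degree $\leq 2$ satisfies $\astab=\dstab$: the degree-$1$ case is a monomial prime and is trivial, while the degree-$2$ case is \cite[Theorem~2.12]{KM} (with Lemma~\ref{L4} handling the subcase $\frak{m}\in\Ass^{\infty}$). Hence $\astab(I(\frak{p}_{\{i\}}))=\dstab(I(\frak{p}_{\{i\}}))$ for every $i$, and taking maxima over $i$ yields
\[
\astab(I)=\max_i\astab(I(\frak{p}_{\{i\}}))=\max_i\dstab(I(\frak{p}_{\{i\}}))=\dstab(I),
\]
which is exactly the assertion; in passing this also records $\astab(I)=\dstab(I)\leq 2$.

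The delicate point, and the step I expect to be the real obstacle, is the depth half $\dstab(I)=\max_i\dstab(I(\frak{p}_{\{i\}}))$ of the reduction. Its associated-primes analogue is precisely what was used in Proposition~\ref{P4}, but there the depth index was instead controlled by hand: Proposition~\ref{P3} forces $\depth(R/I^2)<\depth(R/I)$ once $\dstab(I)>1$, while $\frak{m}\notin\Ass^{\infty}(I)$ gives $\depth(R/I^t)\geq 1$ for all $t$, and combining these with the matroidal value $\depth(R/I)=2$ pins the depth at $1$ from the second power on, yielding $\dstab(I)=2$. Since Lemma~\ref{L3} and Proposition~\ref{P3} are established only in the square-free case, the fallback to the depth-localization formula is to extend them to polymatroidal ideals: I would need to produce $u,v,w\in G(I)$ and variables $x_i,x_j$ with $x_iu=x_jv$ and $x_ix_j\mid w$ even when generators carry repeated variables, and then rerun the reverse-lexicographic linear-quotient count to bound $q(I^2)$ from below. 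The pure-power generators, isolated in Case~1 of Lemma~\ref{L4}, are exactly where the square-free exchange argument breaks down and where the main difficulty concentrates.
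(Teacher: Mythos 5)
Your $\astab$ reduction is sound: under $\frak{m}\notin\Ass^{\infty}(I)$, the formula $\Ass(I^t)=\bigcup_{i}\Ass(I^t(\frak{p}_{\{i\}}))$ of \cite[Remark 2.6]{KM} (or \cite[Remark 9]{T}), together with persistence, gives $\astab(I)=\max_i\astab(I(\frak{p}_{\{i\}}))$, and each $I(\frak{p}_{\{i\}})$ is polymatroidal of degree at most $2$, where $\astab=\dstab$ holds by \cite[Theorem 2.12]{KM} and Lemma \ref{L4}. The genuine gap is the other half of your reduction, $\dstab(I)=\max_i\dstab(I(\frak{p}_{\{i\}}))$: neither \cite[Propositions 2.8, 2.9]{KM} nor anything in this paper provides a localization formula for the depth stability index --- depth, unlike $\Ass$, is not detected by monomial localizations. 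You flag this yourself as ``the real obstacle,'' but you neither prove the identity nor carry out your fallback (extending Lemma \ref{L3} and Proposition \ref{P3} beyond the square-free case), so the proposal is an outline whose central step is missing. Indeed, the paper's own proof of Proposition \ref{P4} conspicuously avoids any such formula: there the depth index is pinned down by hand, via Theorem \ref{T1} (to get $\dstab(I)>1$), Proposition \ref{P3} (a forced depth drop at the second power), and the bound $\depth(R/I^t)\geq 1$ coming from $\frak{m}\notin\Ass^{\infty}(I)$ --- strong evidence that no localization identity for $\dstab$ was available to the authors either.

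The paper's actual proof of Proposition \ref{P5} takes a different and shorter route, which also shows that your expectation that the non-square-free generators are ``where the main difficulty concentrates'' is inverted: they are the easy case. Since $\frak{m}\notin\Ass^{\infty}(I)$, no pure power $x_i^3$ can occur. If moreover no generator $x_i^2x_j$ occurs, then $I$ is matroidal and Proposition \ref{P4} applies. If some $x_i^2x_j\in G(I)$, the linear-quotients count gives $q(I)\geq n-2$, hence $\depth(R/I)\leq 1$; since $\frak{m}\notin\Ass^{\infty}(I)$ forces $\depth(R/I^t)\geq 1$ for all $t$ and polymatroidal ideals have non-increasing depth functions, the depth is identically $1$ and $\dstab(I)=1$. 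On the $\Ass$ side, $\frak{p}_{\{i\}}\notin\Ass^{\infty}(I(\frak{p}_{\{i\}}))$ (via Proposition \ref{P2}), so each localization has constant $\Ass$, and Trung's formula yields $\Ass(I^t)=\Ass(I)$ for all $t$, i.e. $\astab(I)=1$. Thus in the non-square-free case both invariants equal $1$ outright, with no depth localization needed. To salvage your approach you would have to prove the identity $\dstab(I)=\max_i\dstab(I(\frak{p}_{\{i\}}))$ under the hypothesis $\frak{m}\notin\Ass^{\infty}(I)$, and it is not at all clear that this is true.
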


\begin{proof}
Since $\frak{m}\notin\Ass^{\infty}(I)$, it follows that $q(I)<n-1$ and so the polymatroidal ideal $I$ can not contain the pure power of variables.
If $x_i^2x_j\notin G(I)$ for all $i,j$, then $I$ is a matroidal ideal and so, by Proposition \ref{P4}, $\astab(I)=\dstab(I)$. Now suppose $x_i^2x_j\in G(I)$ for some $i,j$. In this case $q(I)\geq n-2$ and so $\depth(R/I)\leq 1$. Again, since $\frak{m}\notin\Ass^{\infty}(I)$ it follows that $1=\depth(R/I)=\depth(R/I^i)$ for all $i$. Thus $\dstab(I)=1$ and also
$\frak{p}_{\{i\}}\notin\Ass^{\infty}(I(\frak{p}_{\{i\}}))$. Therefore, by \cite[Remark 9]{T} and \cite[Remark 2.6, Theorem 2.12]{KM}, $\Ass(I^t)=\bigcup_{i=1}^n\Ass((I(\frak{p}_{\{i\}}))^t)=\bigcup_{i=1}^n\Ass(I(\frak{p}_{\{i\}}))=\Ass(I)$ for all $t$ and so $\astab(I)=1$. Therefore $\astab(I)=\dstab(I)=1$, as required.
\end{proof}

\begin{Proposition}\label{P6}
Let $J$ be an almost square-free Veronese type ideal of degree $d\geq 2$ and $gcd(J)=1$. Then $\frak{m}\in\Ass^{\infty}(J)$.
\end{Proposition}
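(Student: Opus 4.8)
The plan is to compute the linear relation graph $\Gamma_J$ explicitly and then invoke the analytic-spread criterion already used in the proof of Proposition \ref{P2}. Recall from \cite[Corollary 1.6 and Lemma 4.2]{HQ} that for a polymatroidal ideal $J$ one has $\ell(J)=r-s+1$ with $r=|V(\Gamma_J)|$ and $s=s(J)$, and that $\frak{m}\in\Ass^{\infty}(J)$ if and only if $\ell(J)=n$. Since an almost square-free Veronese type ideal is again polymatroidal (the exchange property survives the deletion of a single generator: if two generators differ by a single swap the exchange returns the other generator, and otherwise there are at least two admissible swaps, at most one of which could produce the missing monomial), it suffices to show that $V(\Gamma_J)=\{x_1,\ldots,x_n\}$ and that $\Gamma_J$ is connected, i.e. $r=n$ and $s=1$; then $\ell(J)=n$ and the conclusion follows.

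By definition either $J=I_{d;n}$ or $J=I_{d;n}\setminus\{m_0\}$ for a single $m_0\in G(I_{d;n})$. First I would record that an edge $\{x_i,x_j\}$ of $\Gamma_{I_{d;n}}$ is realized exactly by the pairs $u=x_j\prod_{k\in S}x_k$ and $v=x_i\prod_{k\in S}x_k$ with $S\subseteq\{1,\ldots,n\}\setminus\{i,j\}$ and $|S|=d-1$, of which there are $\binom{n-2}{d-1}$. The hypothesis $\gcd(J)=1$ forces $n\geq d+1$, since for $n\leq d$ the ideal $I_{d;n}$ is zero or principal with nontrivial $\gcd$. Hence when $J=I_{d;n}$ all these pairs are available and $\Gamma_J=K_n$, settling that case with $r=n$, $s=1$.

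The substantive case is $J=I_{d;n}\setminus\{m_0\}$. The key observation is that the deleted generator $m_0$ can equal $u$ for at most one of the pairs above — namely when $x_j\mid m_0$ and $x_i\nmid m_0$, which pins down $S=\supp(m_0)\setminus\{x_j\}$ — and can equal $v$ for at most one pair, when $x_i\mid m_0$ and $x_j\nmid m_0$; since these two divisibility patterns are mutually exclusive, $m_0$ breaks at most one realizing pair of each edge. Therefore every edge $\{x_i,x_j\}$ persists in $\Gamma_J$ as soon as $\binom{n-2}{d-1}\geq 2$, giving again $\Gamma_J=K_n$, whence $r=n$, $s=1$ and $\frak{m}\in\Ass^{\infty}(J)$.

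The main obstacle — and the unique place where $\gcd(J)=1$ is genuinely needed — is to rule out the degenerate range $\binom{n-2}{d-1}=1$, that is $n=d+1$, in this last case. Here I would argue that if $n=d+1$ then every generator of $I_{d;n}$ has the form $(x_1\cdots x_n)/x_p$, so deleting one of them, say $m_0=(x_1\cdots x_n)/x_p$, leaves every remaining generator divisible by $x_p$; thus $x_p\mid\gcd(J)$, contradicting $\gcd(J)=1$. Consequently, for $J=I_{d;n}\setminus\{m_0\}$ one necessarily has $n\geq d+2$, so that $\binom{n-2}{d-1}\geq 2$ and the previous paragraph applies. This exhausts all cases and yields $\frak{m}\in\Ass^{\infty}(J)$.
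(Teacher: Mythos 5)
Your proof is correct, and it takes a genuinely different route from the paper's. Both arguments funnel through the same criterion --- for a polymatroidal ideal, $\frak{m}\in\Ass^{\infty}(J)$ if and only if $\ell(J)=n$, i.e.\ $\Gamma_J$ has vertex set $\{x_1,\ldots,x_n\}$ and is connected, by \cite[Corollary 1.6 and Lemma 4.2]{HQ} --- but they establish this connectedness differently. The paper settles the case $J=I_{d;n}$ by quoting \cite[Corollary 5.5]{HRV}, and in the deletion case argues by induction on $d$: writing $J=(x_1,\ldots,x_{n-d})\cap I_{d;n}$, it identifies each monomial localization $J(\frak{p}_{\{i\}})$ as a square-free or almost square-free Veronese ideal of degree $d-1$, obtains $s(J(\frak{p}_{\{i\}}))=1$ from the inductive hypothesis, and pulls connectedness back to $\Gamma_J$ through Lemma \ref{L2}; the base case $d=2$ is a direct graph computation. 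You instead compute $\Gamma_J$ outright, with no induction and no appeal to \cite{HRV}: each potential edge $\{x_i,x_j\}$ is realized by $\binom{n-2}{d-1}$ pairs of generators, the deleted generator destroys at most one pair per edge (your mutual-exclusivity observation, which is accurate), and $\gcd(J)=1$ eliminates the only degenerate case $n=d+1$, so $\Gamma_J=K_n$. Your route buys three things: it is self-contained (including an inline verification that an almost square-free Veronese ideal is polymatroidal, a fact the paper takes for granted from \cite{JMS} whenever it applies the machinery of \cite{HQ}); it proves the stronger statement that $\Gamma_J$ is the complete graph; and it isolates exactly where $\gcd(J)=1$ is needed --- to force $n\geq d+2$ in the deletion case --- a point the paper's induction leaves implicit, since applying the inductive hypothesis to the localizations $J(\frak{p}_{\{i\}})$ with $n-d+1\leq i\leq n$ requires those ideals to have trivial gcd, which is the same condition $n\geq d+2$. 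What the paper's route buys in exchange is brevity given the quoted results, and reuse of Lemma \ref{L2}, which it needs elsewhere anyway.
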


\begin{proof}
If $J$ is the square-free Veronese type ideal, then by \cite[Corollary 5.5]{HRV} there is nothing to prove. Now, suppose that $gcd(J)=1$  and we may assume that $n\geq 4$. We use induction on $d$. Suppose $d=2$ and we may assume $x_{n-1}x_n\notin G(J)$. In this case $x_1x_2,x_1x_3\in G(J)$ and so $\{x_2,x_3\}\in E(\Gamma_J)$. Also, $x_2x_3,x_2x_4,\ldots,x_2x_n\in G(J)$. Hence
$\{x_1,x_3\},\ldots,\{x_1,x_n\}\in E(\Gamma_J)$ and so $\Gamma_J$ is connected. Therefore $\ell(J)=n$ and hence $\frak{m}\in\Ass^{\infty}(J)$.
Suppose that $d>2$ and that the result has been proved for $d-1$. Now we prove that the result holds for $d$.
We may assume that $x_{n-d+1}x_{n-d+2}\ldots x_n\notin G(J)$. Thus $I_{d;n}=J+(x_{n-d+1}x_{n-d+2}\ldots x_n)$ and so we have $J=(x_1,\ldots,x_{n-d})\cap I_{d;n}$. Thus $J(\frak{p}_{\{i\}})$ is a square-free Veronese type ideal for all $1\leq i\leq n-d$ and  $J(\frak{p}_{\{i\}})$ is an almost square-free Veronese type for all $n-d+1\leq i\leq n$. It therefore follows that $s(J(\frak{p}_{\{i\}}))=1$ for all $i$ and so by Lemma \ref{L2}, $s(J)=1$. Hence $\ell(J)=n$ and so $\frak{m}\in\Ass^{\infty}(J)$, as required.
\end{proof}
Herzog, Rauf and Vladoiu \cite{HRV} proved that If $I=I_{d;n}$ is a square-free Veronese type ideal, then $\astab(I)=\dstab(I)=\lceil\frac{n-1}{n-d}\rceil$.
The following theorem extends this result.
\begin{Theorem}\label{T2}
Let $J$ be an almost square-free Veronese type ideal of degree $d\geq 2$ and $gcd(J)=1$. Then $\astab(J)=\dstab(J)=\lceil \frac{n-1}{n-d}\rceil$.
\end{Theorem}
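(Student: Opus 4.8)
The plan is to treat the two possible shapes of $J$ separately and to reduce everything to a single quantity $t_0:=\lceil (n-1)/(n-d)\rceil$. If $J=I_{d;n}$, the assertion is exactly the result of Herzog, Rauf and Vladoiu quoted just before the statement. Otherwise $G(J)$ omits exactly one square-free generator, and after relabelling the variables I may assume it is $e:=x_{n-d+1}\cdots x_n$; as in the proof of Proposition \ref{P6} this gives $J=(x_1,\dots,x_{n-d})\cap I_{d;n}$, and the hypothesis $\gcd(J)=1$ forces $n-d\ge 2$ (when $d=n-1$, omitting any generator leaves a common factor). By Proposition \ref{P6} we have $\frak{m}\in\Ass^{\infty}(J)$, so the stable depth is $0$; hence $\dstab(J)$ is the least $t$ with $\depth R/J^t=0$, i.e. with $\frak{m}\in\Ass(J^t)$. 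The whole theorem follows once I show that $\frak{m}\in\Ass(J^t)$ precisely when $t\ge t_0$, and that every associated prime other than $\frak{m}$ has already appeared by step $t_0$.

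For the depth index I first record a combinatorial description of the powers: a monomial $m=\prod_i x_i^{b_i}$ of degree $dt$ lies in $J^t$ if and only if $b_i\le t$ for all $i$ and $\sum_{i\le n-d}b_i\ge t$. Necessity is immediate, since a square-free degree-$d$ monomial supported only on $x_{n-d+1},\dots,x_n$ must equal $e$, so each of the $t$ factors of an element of $J^t$ uses at least one of $x_1,\dots,x_{n-d}$. For sufficiency I would argue by exchange: writing $m$ as a product of $t$ square-free degree-$d$ monomials (possible as $m\in(I_{d;n})^t$), if some factor equals $e$ then, because $\sum_{i\le n-d}b_i\ge t$, some other factor carries at least two first-block variables, and one such variable can be swapped into the $e$-factor against a second-block variable, strictly lowering the number of factors equal to $e$ without creating a new one; iterating removes $e$ entirely. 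Now, since $J^t$ is polymatroidal it has a linear resolution, so $\reg(R/J^t)=dt-1$ and any socle element sits in degree $\le dt-1$; as the generators of $J^t$ have degree $dt$, a socle element has degree exactly $dt-1$. Thus $\frak{m}\in\Ass(J^t)$ iff there are exponents $c_i\le t-1$ with $\sum_i c_i=dt-1$ and $\sum_{i\le n-d}c_i\ge t$. A capacity count shows such $c_i$ exist iff $t\ge t_0$: the total $dt-1$ fits into $n$ boxes of size $t-1$ exactly when $dt-1\le n(t-1)$, equivalently $t\ge t_0$, and the inequality $n-d\ge 2$ guarantees that for $t\ge t_0$ one can simultaneously load at least $t$ into the first block (one checks $\max(t,d-1)\le\min((n-d)(t-1),dt-1)$ using $t_0-1\ge (d-1)/(n-d)$). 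Hence $\frak{m}\in\Ass(J^t)$ iff $t\ge t_0$, and $\dstab(J)=t_0$.

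For the associated-prime index I would induct on $d$, the base case $d=2$ being covered by the degree-one localizations below. The monomial localizations of $J$ are again (almost) square-free Veronese ideals of degree $d-1$ in $n-1$ variables: for $i\le n-d$ one gets $J(\frak{p}_{\{i\}})=I_{d-1;n-1}$, while for $i>n-d$ one gets $I_{d-1;n-1}$ with the single generator $e/x_i$ removed, still with $\gcd=1$ because $n-d\ge 2$. Since the non-maximal associated primes of $J^t$ are exactly the lifts of $\bigcup_i\Ass(J(\frak{p}_{\{i\}})^t)$ (\cite[Remark 9]{T}, \cite[Remark 2.6]{KM}) and $\frak{m}$ itself lies in $\Ass(J^t)$ precisely for $t\ge t_0$, one obtains $\astab(J)=\max\bigl(t_0,\ \max_i\astab(J(\frak{p}_{\{i\}}))\bigr)$. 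By the quoted theorem of Herzog--Rauf--Vladoiu (full case) and the induction hypothesis (almost case), each $\astab(J(\frak{p}_{\{i\}}))=\lceil (n-2)/(n-d)\rceil\le t_0$, so $\astab(J)=t_0=\dstab(J)$, as claimed.

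The hard part will be the sufficiency half of the membership description of $J^t$: controlling which degree-$dt$ monomials admit a factorization avoiding the single forbidden generator $e$. The exchange argument sketched above is where the real work lies, and it is exactly the point where the hypotheses $\gcd(J)=1$ (equivalently $n-d\ge 2$) and $|G(J)|\ge|G(I_{d;n})|-1$ enter; the remaining steps (the socle-degree normalization and the localization formula for $\Ass$) are then bookkeeping.
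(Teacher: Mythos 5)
Your argument is correct, but its technical core takes a genuinely different route from the paper's for the harder half of the statement, namely determining when $\frak{m}$ enters $\Ass(J^t)$. The paper works with the full Veronese ideal $I=(J,u)$ and an explicit witness monomial: it checks by hand, via exchange moves, that $J^k:u^{k-1}x_1\cdots x_{d-1}=\frak{m}=I^k:u^{k-1}x_1\cdots x_{d-1}$ for $k=\lceil\frac{n-1}{n-d}\rceil$, and then transfers the Herzog--Rauf--Vladoiu value $\dstab(I)=\astab(I)=k$ to $J$ through the two inequalities $\dstab(J)\le\dstab(I)$ and $\dstab(I)\le\dstab(J)$. You instead prove an exact membership criterion for $J^t$ (a degree-$dt$ monomial lies in $J^t$ iff all exponents are at most $t$ and the first block $x_1,\dots,x_{n-d}$ carries total exponent at least $t$), and your swap argument for sufficiency is sound: pigeonhole yields a factor with two first-block variables, that factor shares at most $d-2$ variables with $e$, so a square-free exchange exists and strictly decreases the number of $e$-factors. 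Combined with the socle-degree normalization $\reg(R/J^t)=dt-1$ (valid because $J^t$ is again polymatroidal, hence has a linear resolution) and your capacity count (which does close, since $t_0\ge 2$ and $n-d\ge 2$ give $\max(t,d-1)\le\min((n-d)(t-1),dt-1)$ for $t\ge t_0$), this yields the clean equivalence $\frak{m}\in\Ass(J^t)\iff t\ge t_0$. What this buys you is a self-contained lower bound $\dstab(J)\ge t_0$, precisely the step that is least transparent in the paper (the inequality $\dstab(I)\le\dstab(J)$ is asserted there with essentially no justification); what it costs is the need to invoke that powers of square-free Veronese ideals are again of Veronese type, $I_{d;n}^t=I_{(dt;t,\dots,t)}$, as the starting point of your factorization --- make that citation to \cite{HRV} explicit. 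The second half of your proof (induction on $d$, the localization formula $\Ass(J^t)\setminus\{\frak{m}\}=\bigcup_{i}\Ass\bigl(J^t(\frak{p}_{\{i\}})\bigr)$ from \cite{T} and \cite{KM}, and the identification of the localizations as square-free or almost square-free Veronese ideals of degree $d-1$ in $n-1$ variables with $\astab\le t_0$) coincides with the paper's argument, including the observation that $\gcd(J)=1$ forces $n-d\ge 2$ so that the induction hypothesis applies to the localized ideals.
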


\begin{proof} If $J$ is the square-free Veronese type ideal, then by \cite[Corollary 5.7]{HRV} there is nothing to prove. Now, suppose that
 $gcd(J)=1$ and we may assume that $d\leq n-2$. Assume that $k=\lceil \frac{n-1}{n-d}\rceil$. In this case
$\frac{n-1}{n-d}=1+\frac{d-1}{n-d}\leq d$ and so we may assume that $k\leq d$. Let $I$ be the square-free Veronese type ideal of degree $d$ and we may assume that $I=(J,u)$ such that $u=x_nx_{n-1}\ldots x_{n-d}x_{n-d+1}\notin G(J)$. It is clear that $I^k:u^{k-1}x_1\ldots x_{d-1}=\frak{m}=J^k:u^{k-1}x_1\ldots x_{d-1}$. Indeed, if $1\leq j\leq k-1$ or $d\leq j\leq n$ then $x_ju^{k-1}x_1\ldots x_{d-1}=(x_1u)(x_2u)\ldots (x_{k-1}u)(x_jx_k\ldots x_{d-1})=(\frac{x_1u}{x_{i_1}})(\frac{x_2u}{x_{i_2}})\ldots(\frac{x_{k-1}u}{x_{i_{k-1}}})(x_{i_1}\ldots x_{i_{k-1}}x_k\ldots x_{d-1}x_j)\in I^k$ and if
$k\leq j\leq d-1$ then $x_ju^{k-1}x_1\ldots x_{d-1}=(x_ju)(x_1u)\ldots (x_{k-2}u)(x_{k-1}x_k\ldots x_{d-1})=\\(\frac{x_ju}{x_{i_1}})(\frac{x_2u}{x_{i_2}})\ldots(\frac{x_{k-2}u}{x_{i_{k-1}}})(x_{i_1}\ldots x_{i_{k-1}}x_{k-1}x_k\ldots x_{d-1})\in I^k$, where $x_{i_1},\ldots, x_{i_{k-1}}$ are distinctive elements of $\{x_{n-d+1},\ldots,x_n\}$. Since $\dstab(J)=\min\{t\vert \frak{m}\in\Ass(I^t)\}$, it follows that $\dstab(J)\leq\dstab(I)=\astab(I)\leq d$ and $\dstab(J)\leq\astab(J)$.
By Proposition \ref{P6}  $\frak{m}\in\Ass^{\infty}(J)$ and so we assume that $t$ is the smallest integer such that  $\Ass^{\infty}(J)=\Ass(J^t)$. By using the above argument we have  $\frak{m}\in\Ass^{\infty}(I)$. Thus $\astab(I)=\dstab(I)\leq\dstab(J)$ and so $\dstab(J)=\dstab(I)=\astab(I)=k$. By induction on $d$ we will prove that $\astab(J)=\dstab(J)=k$. If $d=2$, then by \cite[Proposition 2.12]{KM} the result follows. Let $d\geq 3$ and the result has been proved for smaller values of $d$. By \cite[Remark 2.6]{KM} we have $\Ass(J^{k+j})\setminus\{\frak{m}\}=\bigcup_{i=1}^{n}\Ass(J^{k+j}(\frak{p}_{\{i\}}))$ for all $j$. By the induction hypothesis $\astab(J(\frak{p}_{\{i\}})=\dstab(J(\frak{p}_{\{i\}})\leq k$. This implies that
  $\Ass(J^{k+j})\setminus\{\frak{m}\}=\Ass(J^k)\setminus\{\frak{m}\}$ and so $\Ass(J^{k+j})=\Ass(J^k)$ for all $j$. Therefore $\astab(J)\leq k=\dstab(J)$ and so $\astab(J)=\dstab(J)=\dstab(I)=\astab(I)=k$, as required.
\end{proof}

{\bf Acknowledgement:} We would like to thank deeply grateful to the referee for the careful reading
of the manuscript and the helpful suggestions. The second author has been supported
financially by Vice-Chancellorship of Research and Technology, University of Kurdistan
under research Project No. 99/11/19299.

%%%%%%%%%%%%%%%%%%%%%%%%%%%%%%%%%%%%%%%%%%%%%%%%%%%%%%%%%%%%%%%%%%%%%%%%%%%%%

\end{document}